\numberwithin{table}{section}
\numberwithin{figure}{section}
\numberwithin{equation}{section}
\definecolor{darkblue}{rgb}{.2, 0.2,.8}
\definecolor{darkgreen}{rgb}{0,0.5,0.3}
\definecolor{darkred}{rgb}{.8, .1,.1}
\newcommand{\bfx}{\vect{x}}
\newcommand{\bfX}{\vect{X}}
\newcommand{\bfalp}{\vect{\alpha}}
\newcommand{\bfpi}{\vect{\pi}}
\newcommand{\bfT}{\mat{T}}
\newcommand{\bft}{\vect{t}}
\newcommand{\bfe}{\vect{e}}
\newcommand{\1}{{\mathbf 1}}
\newcommand{\0}{\mat{0}}
\newcommand{\R}{\mathbb{R}}
\newcommand{\E}{\mathbb{E}}
\renewcommand{\P }{{\mathbb P}}
\newcommand{\ov}{\overline}
\newtheorem{lemma}{Lemma}[section]
\newtheorem{theorem}[lemma]{Theorem}
\newtheorem{proposition}[lemma]{Proposition}
\newtheorem{definition}[lemma]{Definition}
\newtheorem{corollary}[lemma]{Corollary}
\newtheorem{example}{Example}[section]
\newtheorem{remark}{Remark}[section]
\newtheorem{algorithm}{Algorithm}[section]
\newcommand{\vect}[1]{\pmb{#1}}
\newcommand{\mat}[1]{\boldsymbol{\bm #1}}
\DeclareMathOperator*{\argmax}{arg\,max}
\begin{document}

\title[Continuous scaled phase-type distributions]{Continuous scaled Phase-type distributions}

\author[H. Albrecher]{Hansj\"{o}rg Albrecher}
\address{Faculty of Business and Economics,
University of Lausanne,
Quartier de Chambronne,
1015 Lausanne, and Swiss Finance Institute, Lausanne, 
Switzerland}
\email{hansjoerg.albrecher@unil.ch}

\author[M. Bladt]{Martin Bladt}
\address{Faculty of Business and Economics,
University of Lausanne,
Quartier de Chambronne,
1015 Lausanne,
Switzerland}
\email{martin.bladt@unil.ch}

\author[M. Bladt]{Mogens Bladt}
\address{Department  of Mathematics,
University of Copenhagen,
Universitetsparken 5,
DK-2100 Copenhagen,
Denmark}
\email{bladt@math.ku.dk}

\author[J. Yslas]{Jorge Yslas}
\address{Institute of Mathematical Statistics and Actuarial Science,
University of Bern,
Alpeneggstrasse 22,
CH-3012 Bern,
Switzerland}
\email{jorge.yslas@stat.unibe.ch}

\date{\today}

\begin{abstract}
Products between phase-type distributed random variables and any independent, positive and continuous random variable are studied.  Their asymptotic properties are established, and an expectation-maximization algorithm for their effective statistical inference is derived and implemented using real-world datasets. In contrast to discrete scaling studied in earlier literature, in the present continuous case closed-form formulas for various functionals of the resulting distributions are obtained, which facilitates both their analysis and implementation. The resulting mixture distributions are very often heavy-tailed and yet retain various properties of phase-type distributions, such as being dense (in weak convergence) on the set of distributions with positive support.
\end{abstract}
\keywords{heavy tails;  parameter estimation; phase-type; scale mixtures}
\subjclass{Primary: 60E05 Secondary: 60G70; 60J22; 62F10; 62N01; 62P05}
\maketitle

\section{Introduction}

{\color{black}A phase-type (PH) distributed random variable is defined as the time until absorption of a time-homogeneous pure-jump Markov process with a finite state-space having one absorbing state and all others transient.}
PH distributions are particularly attractive within applied probability and statistics since its members are explicitly described through functionals involving matrix exponentials, making this class both versatile and tractable, see \cite{Bladt2017} for a comprehensive account of PH distributions. 
Furthermore, the PH class is known to be dense (in the sense of weak convergence) {\color{black}among} the distributions concentrated on the positive half-line, so that one can approximate any non-negative random variable with arbitrary precision by a PH random variable.
However, PH random variables are always light-tailed (have an exponentially bounded tail), and for many applications, this is too restrictive. In particular, in various application areas, there is a focus on modeling the tail, which is often heavier than exponential. 

As an alternative, a class of discrete scaled PH distributions (NPH) was introduced in \cite{bladt2015calculation}. The NPH class consists of distributions that can be expressed as the distribution of a product $VY$, where $V$ is a discrete random variable, and $Y$ is PH distributed. Parameter estimation for the NPH class was subsequently treated in \cite{bladt2017fitting} via an expectation-maximization (EM) algorithm {\color{black}(see also \cite{asmussen1996fitting} for the unscaled case)}. Finally, in \cite{rojas2018asymptotic}, the tail behavior in the general case, when the scale component is also allowed to be continuous, was studied in detail. 
In particular, the authors showed that a scaled PH distribution is heavy-tailed if and only if the scaling distribution is unbounded.
Thus, the NPH class is of particular interest for modeling purposes due to its inherited denseness in the class of distributions on the positive half-line and its genuinely heavy tails (for a recent discussion of actuarial applications of discrete mixtures of exponential distributions, see also \cite{cossette2021univariate}). 
However, one downside of NPH distributions is that expressions such as the density and distribution functions are given in terms of infinite {\color{black}matrices}, which in practice can only be computed up to a finite number of terms.  

In this paper, we study the class of scaled PH distributions for the case when the scaling component is continuous. It turns out that under this construction, one can obtain closed-form formulas for different functionals. We provide new results on the tail behavior of these distributions, which complements the work in \cite{rojas2018asymptotic} and answers some open questions posed there. We then proceed to develop an EM algorithm for maximum-likelihood estimation of these models, which enables the use of this class of models on real-life data.  We adapt the algorithm to the case of censored data, and we show how the EM algorithm can be employed to approximate a known given theoretical distribution. A crucial difference to the NPH arises as a by-product of the closed-form formulas that we obtain for the mixed distributions: the latter can be evaluated using functional calculus tools, which avoids the truncation of the infinite series of the NPH case. Recently, \cite{furman2021} considered the scaling of PH distributions in a multivariate context with an interpretation of background risk models, in the light of which one might also see the present contribution as a theoretical underpinning of marginal properties of such models together with the development of an estimation algorithm for them when faced with real data. 

{\color{black}
However, although we are exclusively interested in products of random variables where at least one of them is PH distributed, much work has been done in more general probabilistic settings. We give a brief overview here. Breiman's lemma (cf. \cite{breiman1965some} and then extended by \cite{cline1994subexponentiality}) established the regular variation of the product of two independent random variables where one is regularly varying and the other one has a moment condition. In \cite{embrechts1980closure}, the closure property was shown, namely that the product of regularly varying variables is again regularly varying with the same tail index, although no explicit asymptotics were provided. Subsequently, \cite{cline1986convolution} (see also \cite{cline1987convolutions}) linked the asymptotics of products of independent random variables in terms of survival function ratios. More generally, \cite{cline1994subexponentiality} studied the closure property of product convolutions within the subexponential class of distributions. Under more relaxed conditions, \cite{tang2006subexponentiality} established the closure of product convolutions for a slightly smaller class than the subexponential one. The latter author also considered necessary and sufficient conditions of the product distribution to be long-tailed when one of the component variables satisfies a generalization of lattice and long-tailed distributions.
The multivariate subexponential case was studied in \cite{samorodnitsky2016multivariate}. Recently, \cite{xu2017necessary} found necessary and sufficient conditions for the subexponentiality of the product convolution, and provided a sufficient condition for the reverse problem: establishing the subexponentiality of a component given that of the product convolution. Finally, we would like to remark that the approach in the present paper is less general than the one in some of the papers above, but our focus is on a statistically tractable class that still allows for a fairly broad body and tail behavior of the distribution.
}

The rest of the paper is organized as follows. In Section~\ref{sec:ph} we present an overview of the PH class and its most important properties for our purposes. In 
Section~\ref{sec:cph} we introduce the class of continuous scaled PH distributions, provide new insights on their tail behavior (Section~\ref{sec:tail}), and derive an EM algorithm for parameter estimation (Section~\ref{sec:em}) as well as an extension to the case of censored observations (Section~\ref{sec:censored}). In Section~\ref{sec:examples} we provide some numerical illustrations with real-life data and concerning the approximation of given distributions. Finally, Section~\ref{sec:conclusion} concludes. 

\section{Preliminaries on PH distributions}\label{sec:ph}
This section presents the relevant preliminaries on PH distributions. For a random variable $X$, the notation $X\sim F$ for $F$ being a distribution function, density, or acronym, is understood as $X$ following the distribution uniquely associated with $F$. Unless stated otherwise, equalities between random objects hold almost surely. For two real-valued functions, $g,h$ the terminology $g(t)\sim h(t)$, as $t\to \infty$ means that $\lim_{t\to \infty}g(t)/h(t)=1$.

Let $ ( J_t )_{t \geq 0}$ denote a time-homogeneous Markov jump process on a state space $\{1, \dots, p,$ $p+1\}$, where states $1,\dots,p$ are transient and state $p+1$ is absorbing. Then $ ( J_t )_{t \geq 0}$ has an intensity matrix of the form
\begin{align*}
	\mat{\Lambda}= \left( \begin{array}{cc}
		\bfT &  \bft \\
		\0 & 0
	\end{array} \right),
\end{align*}
where $\bfT=(t_{kl})_{k,l=1,\dots,p}$, is a $p \times p$ sub-intensity matrix, $\bft = (t_1 ,\dots,t_p )^{\top}$ is a $p$-dimensional column vector, and $\0$ is the $p$-dimensional row vector of zeroes. Since rows of $\mat{\Lambda}$ sum to zero, we have that $\bft=- \bfT \, \bfe$, where $\bfe $ is the $p$-dimensional column vector of ones. Let $ \pi_{k} = \P(J_0 = k)$, $k = 1,\dots, p$, $\bfpi = (\pi_1 ,\dots,\pi_p )$ be the initial distribution of the chain, and assume that $\P(J_0 = p + 1) = 0$, that is, the time until absorption is necessarily positive. Then we say that the time until absorption 
\begin{align*}
	Y = \inf \{ t \geq  0 \mid J_t = p+1 \}
\end{align*}
has a phase-type distribution with representation $(\bfalp,\bfT )$ and we write $Y \sim \mbox{PH}(\bfalp,\bfT )$. 
It can be shown that the density $f_Y$ and distribution function $F_Y$ for  $Y \sim \mbox{PH}(\bfalp,\bfT )$ are given by the closed-form expressions 
\begin{gather*}
	  f_Y(y) =  \bfpi \exp({\bfT y}) \bft \,, \quad y>0 \,, \\
	 F_Y(y) = 1- \bfpi \exp({\bfT y}) \bfe \,, \quad y>0 
\end{gather*}
in terms of the matrix exponential of a matrix, defined as $$\exp(\mat{M})=\sum_{n=0}^\infty \frac{\mat{M}^n}{n!}.$$

More generally, if $g$ is any analytic function and $\mat{M}$ is a matrix, we may define  $g(\mat{M})$ by Cauchy's formula, given by
\begin{align*}
	g(\mat{M}) = \frac{1}{2 \pi i} \oint_{\Gamma} g(z)(z\mat{I} - \mat{M}) dz \,,
\end{align*}
where $\Gamma$ is a simple closed  path in $\mathbb{C}$ which encloses the eigenvalues of $\mat{M}$, see \cite[Section 3.4.]{Bladt2017} for details. {\color{black} More generally, we refer the reader to \cite{doolittle1998analytic} for a condensed treatment of functions of matrices through the Cauchy integral, and \cite{Higham2008} for other possible and equivalent methods.}

The evaluation of such expressions can be done in several ways but is not always a straightforward task, especially for high-dimensional matrices, and complex functional analysis serves mostly as a mathematical tool. When performing estimation on real-world data, the resulting matrix is often diagonalizable. Hence, alternative methods such as diagonalization (and, more generally, the Jordan decomposition) of matrices are powerful tools in this context.

The fact that the underlying Markov chain is time-homogeneous has the consequence that the sojourn times, that is, the time spent in each state at each visit, are necessarily exponentially distributed. Furthermore, the tail $\overline{F} = 1-F$ of a PH distribution is asymptotically exponential and has the following analytic expression:
\begin{align}\label{PHtail_expansion}
\overline{F}(y)=\sum_{j=1}^{m} \sum_{k=0}^{\kappa_{j}-1}y^{k} \exp({\text{Re}\left(-\lambda_{j}\right) y })\left[a_{j k} \sin \left(\text{Im}\left(-\lambda_{j}\right) y \right)+b_{j k} \cos \left(\text{Im}\left(-\lambda_{j}\right) y \right)\right] \,,
\end{align}
where $-\lambda_j$ are the eigenvalues of the Jordan blocks $\mat{J}_{j}$ of $\bfT$, with corresponding dimensions $\kappa_j$, $j = 1,\dots, m$, and $a_{jk}$ and $b_{jk}$ are constants depending on $\bfpi$ and $\bfT$. If $-\lambda$ is the largest real eigenvalue of $\bfT$ and $n$ is the dimension of the Jordan block of $\lambda$, then it is easy to see from \eqref{PHtail_expansion} that
\begin{align}\label{PHtail_asymptotic}
	\overline{F}(y) \sim c y^{n -1} \exp({-\lambda y}) \,, \quad y\to \infty \,,
\end{align}
where $c$ is a positive constant. That is, all PH distributions have exponential tails with Erlang-like second-order bias terms. Consequently, the practical modeling of heavy tails using PH distributions can be problematic when the tail behavior is of interest. Nonetheless, the above formulas serve as a building block for the analysis that will follow in the sequel. 

\section{Random scalings of PH random variables}\label{sec:cph}
In this section we consider a univariate PH random variable $Y\sim \mbox{PH}(\bfpi,\bfT)$ and define its randomly scaled counterpart
\begin{align}\label{conditional}X:= \frac{1}{\Theta}\,Y\,,\end{align}
where $\Theta$ is some positive real-valued random variable, independent of $Y$. From the probabilistic construction of a PH random variable, it is clear that such a scaling can be conditionally realized by a transformation of the time axis, so that it can be subsumed through a modified sub-intensity matrix, that is,
\begin{align}\label{conditional1}
	X \mid \Theta = \theta \sim \mbox{PH}(\bfpi, \theta \bfT)\,.
\end{align}
for every realization of $\Theta$. 
%
This simple observation allows us to now obtain the basic properties of random variables satisfying \eqref{conditional}, stated in the following result. Here, and in what follows, we denote by $$\mathcal{L}_\Theta(s) = \E[\exp(-s\Theta)],\quad s>0,$$ the Laplace transform of $\Theta$ and by $\mathcal{L}_\Theta^{\prime}(s)$ its corresponding derivative with respect to $s$. 

{\color{black} In what follows, all considered functions are analytic in the region where the eigenvalues of $-\bfT$ lie, such that we may consider evaluating them at $-\bfT$, and more generally at $-\bfT x$ for any $x>0$, without any concern. Recall also that for Laplace transforms, the analytic property in the domain of absolute convergence follows from Morera's theorem (cf \cite{rudin87}).}

\begin{proposition}\label{prop:31}
Let $X$ be given by \eqref{conditional}. Then
\begin{enumerate}[itemsep=.3cm]
\item $F_X(x)= 1 - \bfpi \mathcal{L}_{\Theta}(-\bfT x)\bfe,\quad x>0$.
\item $f_X(x)= -\bfpi  \mathcal{L}^{\prime}_{\Theta}(-\bfT x)\,\bft,\quad x>0$.
\item $\mathcal{L}_X(s)=\bfpi\E \left[\left(\dfrac{s}{\Theta}\mat{I}-\bfT\right)^{-1}\right]\,\bft,\quad s>0$.
\item $\E(X^\nu)= \E(1/\Theta^\nu)\,\Gamma(\nu + 1)\bfpi(-\bfT)^{-\nu}\bfe$, for $\nu\ge0$, provided that it is well-defined.
\end{enumerate}
\end{proposition}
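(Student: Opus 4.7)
The plan is to reduce each of the four statements to the corresponding closed-form expression for an ordinary PH distribution and then take expectation over $\Theta$, exploiting the conditional representation \eqref{conditional1}. That is, given $\Theta=\theta$, $X\sim\mbox{PH}(\bfpi,\theta\bfT)$ with sub-intensity matrix $\theta\bfT$ and exit vector $-\theta\bfT\bfe=\theta\bft$, so every conditional functional of $X$ is a known matrix expression in $\theta\bfT$. The key unifying observation is that, for any analytic $g$, the conditional expression can be written as $g(\Theta\bfT x)$ for appropriate choices of $g$ and $x$, and upon taking $\E$ one recognises the Laplace transform (or a derivative thereof) of $\Theta$ evaluated at $-\bfT x$ in the sense of matrix functional calculus.

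For (1), tower-law gives $F_X(x)=1-\bfpi\,\E[\exp(\Theta\bfT x)]\,\bfe$, and by writing $\exp(\Theta\bfT x)=\exp(-\Theta(-\bfT x))$ one identifies the matrix expectation as $\mathcal{L}_\Theta(-\bfT x)$. For (2), either differentiate (1) or use $f_{X\mid\Theta=\theta}(x)=\bfpi\,e^{\theta\bfT x}\,\theta\bft$, yielding
\begin{align*}
f_X(x) \;=\; \bfpi\,\E[\Theta\exp(\Theta\bfT x)]\,\bft \;=\; -\bfpi\,\mathcal{L}^{\prime}_\Theta(-\bfT x)\,\bft,
\end{align*}
since $\mathcal{L}^{\prime}_\Theta(s)=-\E[\Theta e^{-s\Theta}]$. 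For (3), the Laplace transform of $\mbox{PH}(\bfpi,\theta\bfT)$ is $\bfpi(s\mat{I}-\theta\bfT)^{-1}\theta\bft$; pulling the scalar $\theta$ inside the inverse gives $\bfpi((s/\theta)\mat{I}-\bfT)^{-1}\bft$, and taking $\E$ over $\Theta$ yields the stated formula. For (4), the $\nu$-th moment of $\mbox{PH}(\bfpi,\theta\bfT)$ is $\Gamma(\nu+1)\bfpi(-\theta\bfT)^{-\nu}\bfe=\Gamma(\nu+1)\theta^{-\nu}\bfpi(-\bfT)^{-\nu}\bfe$, so integrating against the law of $\Theta$ produces $\E(\Theta^{-\nu})\Gamma(\nu+1)\bfpi(-\bfT)^{-\nu}\bfe$, finite precisely when $\E(\Theta^{-\nu})<\infty$.

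The only nontrivial technical step is the interchange of $\E$ with the matrix functional evaluations, namely showing that $\E[g(\Theta\bfT x)]=\tilde g(-\bfT x)$ with $\tilde g(s)=\E[g(-\Theta s)]$. The cleanest justification uses the Cauchy integral representation
\begin{align*}
g(\Theta\bfT x) \;=\; \frac{1}{2\pi i}\oint_\Gamma g(\Theta z)(z\mat{I}-\bfT x)^{-1}\,dz,
\end{align*}
where $\Gamma$ is a bounded contour enclosing the spectrum of $\bfT x$, which lies strictly in the open left half-plane by the sub-intensity property of $\bfT$. For our choices $g(w)=e^{w}$ and $g(w)=w\,e^{w}$, the integrand is uniformly bounded on $\Gamma$ by an integrable function of $\Theta$ (because $\mathrm{Re}(z)\leq -\delta<0$ on $\Gamma$), so Fubini's theorem swaps $\E$ with the contour integral, and the resulting kernel $\E[g(\Theta z)]=\mathcal{L}_\Theta(-z)$ or $-\mathcal{L}^{\prime}_\Theta(-z)$ is analytic on a neighbourhood of $\Gamma$ as noted in the paragraph preceding the statement. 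The analogous justification for (3) and (4) is simpler because the conditional expressions are explicit rational or power functions of $\theta$ and positivity allows direct use of Fubini.
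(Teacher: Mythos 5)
Your proof is correct and follows essentially the same route as the paper's: condition on $\Theta$ via \eqref{conditional1}, invoke the standard closed-form PH expressions for the tail, density, Laplace transform and moments with sub-intensity matrix $\theta\bfT$, and integrate against the law of $\Theta$ to recognise $\mathcal{L}_\Theta(-\bfT x)$ and $\mathcal{L}_\Theta'(-\bfT x)$ by functional calculus. The only difference is that you make explicit the Fubini/Cauchy-integral justification for interchanging $\E$ with the matrix function evaluation, which the paper leaves implicit in its remark on analyticity preceding the proposition; this is a welcome (and correct) extra step, not a change of method.
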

\begin{proof}

(1) follows from \eqref{conditional1}, since one can obtain the tail of $X$ as follows:
\begin{align*}
	\ov{F}_{X}(x) &= \P (X >x)\\ & = \int_0^\infty \P(X>x | \Theta=\theta ) dF_\Theta(\theta)\\
	&=\bfpi \int_0^\infty  \exp({ \theta \bfT x  })  \, dF_\Theta(\theta) \,\bfe\,.
\end{align*}
Taking derivatives in the above expression yields
\begin{align*}
	f_X(x) &= - \bfpi \int_0^\infty  \theta \bfT \exp({ \theta \bfT x  })  \, dF_{\Theta}(\theta) \,\bfe\,,
\end{align*}
from which (2) easily follows.
(3) follows from
\begin{align*}
\mathcal{L}_X(s)&=\E[\exp({-s X})]\\
&=\int_0^\infty \E[\exp({-(s/\theta) Y})]dF_{\Theta}(\theta)\\
&=\int_0^\infty \bfpi ((s/\theta)\mat{I}-\bfT)^{-1}\bft\, dF_{\Theta}(\theta)\,.
\end{align*}

Finally (4) is a consequence of $\E(Y^\nu)=\Gamma(\nu + 1)\bfpi(-\bfT)^{-\nu}\bfe$, see \cite{Bladt2017}.
\end{proof}
{\color{black}
\begin{remark}\rm
If $\Theta$ has an infinitely divisible distribution $F_\Theta$, then 
\[    \mathcal{L}_{\Theta}(x) = \exp({-B(x)})  \]
for a unique Bernstein function $B$, see for instance \cite{schilling2012bernstein}, Prop.3.12. In this case, $F_X$ has representation
\[  F_X(x) = 1 - \vect{\pi} \exp({-B(-\mat{T}x)})  \vect{e} .  \]
Here $-B(-\mat{T}x)$ is a sub--intensity matrix for all $x>0$ (cf. \cite{berg1993generation}). The corresponding density is then
\[   f_X(x) = \vect{\pi} \exp({-B(-\mat{T}x)}) B^\prime (-\mat{T}x)\mat{T}\vect{e} = -  \vect{\pi} \exp({-B(-\mat{T}x)}) B^\prime (-\mat{T}x)\vect{t} . \]
The above formulas show that for a fairly broad class of mixing distributions we obtain a closed-form formula in terms of Bernstein functions for the resulting CPH distribution, which can be computationally advantageous.
\end{remark}
}

\begin{remark}\normalfont
Scaling a PH distribution with a variable whose law is of unbounded support always results in a heavy-tailed distribution (see the next subsection for a formal definition of heavy-tailedness). However, the precise nature of the tail asymptotics, including higher-order expansions, can be calculated by applying a Jordan normal form expansion in Property (1) above. Note that the resulting behavior is solely determined by the functional form of the Laplace transform of ${\Theta}$ and the eigenvalues of $\mat{T}$.
\end{remark}

Thus, we make the following formal definition of such an $X$ in terms of scaling.

\begin{definition}\normalfont
A random variable $X$ is said to have a {\em continuous scaled phase-type} (CPH)
distribution with representation $(\vect{\pi},\mat{T})$ and scaling variable $\Theta$ if its distribution function is given by
\[ F_X(x)=1 - \bfpi \mathcal{L}_{\Theta}(-\bfT x)\bfe\,,\quad x>0\,,\]
in which case $X$ is the product of $1/\Theta$ and an independent PH$(\vect{\pi},\mat{T})$ variable.
We write
$X\sim \mbox{CPH}(\vect{\pi},\mat{T},\Theta)$.
\end{definition}

{\color{black}
\begin{theorem}\normalfont 
Let $\mathcal{C}$ be a family of positive random variables in which we may find a sequence which degenerates weakly into a positive constant $c>0$. Then the class 
$$\mathcal{D}=\{Y/\Theta\mid \Theta\in\mathcal{C},Y\sim\mbox{PH}\}$$
of CPH distributions is weakly dense in the class of distributions on the positive half-line. In other words, for any given positive random variable $Z$, we may find a sequence $\{X_n\}_{n=1}^\infty\subset \mathcal{D}$ such that
$$X_n\stackrel{d}{\to}Z.$$
\end{theorem}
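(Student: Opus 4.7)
The plan is to combine two ingredients: the weak denseness of the PH class in the distributions on $(0,\infty)$ (already stated in Section~\ref{sec:ph}), and the hypothesis that $\mathcal{C}$ contains a sequence $\Theta_n$ weakly degenerating to some positive constant $c>0$. Given a target positive random variable $Z$, I would first approximate $cZ$ by PH variables rather than $Z$ itself, because multiplication by $\Theta_n \approx c$ in the denominator will then ``cancel'' the $c$ factor and leave $Z$ in the limit.

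Concretely, I would carry out the following steps. First, by the denseness of PH in the weak topology on the positive half-line, choose $Y_n \sim \mathrm{PH}$ with $Y_n \xrightarrow{d} cZ$. Second, by hypothesis choose $\Theta_n \in \mathcal{C}$ with $\Theta_n \xrightarrow{d} c$. Third, realize $Y_n$ and $\Theta_n$ on a common probability space as independent random variables (this is always possible by passing to a product space; nothing in the CPH construction demands any coupling). Define
\[
X_n := Y_n / \Theta_n.
\]
By construction $X_n \in \mathcal{D}$, since the numerator is PH and the denominator lies in $\mathcal{C}$.

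It remains to show $X_n \xrightarrow{d} Z$. Since $\Theta_n \xrightarrow{d} c$ with $c$ constant, one also has $\Theta_n \xrightarrow{P} c$. Combined with the independence of $Y_n$ and $\Theta_n$, this yields joint weak convergence $(Y_n, \Theta_n) \xrightarrow{d} (cZ, c)$, where $cZ$ and $c$ on the right-hand side can be taken on the same space. Because $c>0$, the map $(y,\theta)\mapsto y/\theta$ is continuous on $[0,\infty)\times (0,\infty)$ at every point of the support of $(cZ,c)$, so the continuous mapping theorem (or, equivalently, a direct application of Slutsky's theorem) gives
\[
X_n = Y_n/\Theta_n \xrightarrow{d} (cZ)/c = Z,
\]
which is the desired conclusion.

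The only real subtlety is ensuring that the continuous mapping argument is applied at a point of continuity of the division map, which is why the hypothesis $c>0$ (rather than $c\ge 0$) is essential: degeneration of the scaling sequence to $0$ would leave us dividing by arbitrarily small values and destroy the argument. Beyond this, the proof is essentially a ``Slutsky + PH-denseness'' routine, and I do not anticipate any genuine obstacle; the role of the positivity of $c$ and the freedom to rescale the target from $Z$ to $cZ$ are the two observations that make the argument go through cleanly.
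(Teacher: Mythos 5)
Your proof is correct and follows essentially the same route as the paper's: choose $\Theta_n\in\mathcal{C}$ degenerating to a positive constant, use PH-denseness to pick $Y_n$ converging weakly to the (rescaled) target, and conclude by Slutsky/continuous mapping that $X_n=Y_n/\Theta_n\stackrel{d}{\to}Z$. Your write-up is in fact somewhat more careful than the paper's sketch, in particular in making explicit that one should approximate $cZ$ rather than $Z$ and in justifying the passage to joint convergence via independence.
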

\begin{proof}
The proof is a simple application of convergence through the diagonal of an array. For instance, by choosing a sequence of scaling random variables $\Theta_n$ with constant mean $k$ and variances shrinking to zero, we may then use the corresponding denseness property of regular PH distributions 
to find a suitable sequence $\{Y_n\}_{n=1}^\infty$ of PH distributed random variables with $Y_n\to Z$, which in particular establishes the required convergence of the $X_n=Y_n/\Theta_n$. Alternatively, one can adapt along the same lines the elementary proof of the denseness of mixtures of Erlang distributions (which are a special case of PH distributions) to the continuously scaled case.
\end{proof}
}

We now present some examples where the distribution of $X$ has an explicit expression.  This is not only of mathematical interest, but also of practical relevance when considering their evaluation and estimation.

%
%

\begin{example}[Gamma mixing]\normalfont 
	Consider Gamma mixing with $\Theta\sim \mbox{Gamma} (\alpha , 1)$ for any positive shape parameter $\alpha$. Then
%
\begin{align}\label{gammmix}
	\ov{F}_{X}(x)
	& = \bfpi  ( \mat{I} - x \bfT )^{-\alpha} \bfe\,,\quad x>0\,,
\end{align}
which is well-defined since $x\cdot \text{Re}(\lambda)<1 $ holds for any $x>0$, and $\lambda$ any eigenvalue of $\mat{T}$. Thus $ \mat{I} - x \bfT$ is invertible. We also have that 
\begin{align*}
	f_{X}(x) 
	& = \alpha  \bfpi  ( \mat{I} - x \bfT )^{-\alpha-1} \bft\,, \quad x>0\,.
\end{align*}
 We call this distribution {\em matrix-Pareto type II} to distinguish it from the matrix-Pareto distribution introduced in \cite{albrecher2019inhomogeneous}. Note that there is a fundamental difference to the latter in that the tail behavior is specified by a scalar (shape) parameter $\alpha$, and the scale is determined by the matrix $\bfT$. In contrast, the matrix-Pareto distribution in \cite{albrecher2019inhomogeneous} has a tail behavior that depends on the eigenvalues of the matrix $\bfT$ (arising from its matrix-valued shape parameter) and a scalar scale parameter. 
 
 Observe that mixing with more general $\Theta\sim \mbox{Gamma} (\alpha , \beta)$ results in the same class, since $\beta>0$ is a scale parameter and PH distributions are closed with respect to deterministic scalings. Consequently, from this CPH class, we may obtain the classical PH class when degenerating the Gamma mixing distribution into any positive point mass (by keeping $\alpha/\beta$ constant and letting $\alpha,\beta\to \infty$), i.e., the so-called Erlangization. The latter property is of particular interest when using this class of distributions for modeling purposes.\hfill $\Box$
\end{example}

\begin{example}[Positive stable mixing]\normalfont 
	Consider L\'evy mixing with parameter $\eta >0$ and density
	$$f_{\Theta}(\theta)=\frac{\eta}{2\sqrt{\pi \theta^3}} \exp({-\eta^2/(4 \theta)}),\quad \theta>0,$$ which is a positive stable distribution with stability parameter $1/2$, then 
\begin{eqnarray*}
	\overline{F}_{X}(x)&=&\vect{\pi}\mathcal{L}_{\Theta}(-\mat{T}x)\vect{e} \\
	&=&\int_0^\infty \vect{\pi}\exp({\theta\mat{T}x})\vect{e} \frac{\eta}{2\sqrt{\pi \theta^3}}\exp({-\eta^2/(4\theta)})d \theta \\
	&=&\vect{\pi}\exp\left({\eta \left(-\sqrt{-\mat{T}}\right)\sqrt{x}}\right)\vect{e} \,.
\end{eqnarray*}
The matrix  $-\sqrt{-\mat{T}}$ is again a sub-intensity matrix, see \cite[Page 160]{Higham2008}, so $(\bfpi,-\sqrt{-\mat{T}})$ can be seen as the parameters of another PH distribution.

More generally, let us consider a positive stable random variable $\Theta$ with corresponding Laplace transform $\mathcal{L}_\Theta(s)= \exp({-s^\alpha}) $, where $\alpha \in (0,1]$. Then, 
\begin{align*}
	\overline{F}_{X}(x) = \bfpi \exp ( - (-\bfT)^\alpha x^\alpha ) \bfe \,.
\end{align*}
Again, $(\bfpi,- (-\bfT)^\alpha)$ are parameters corresponding to another PH distribution. 
	Such distributions have a Weibull-type tail behavior, and they span the same class as matrix-Weibull laws for $\alpha \in (0,1]$ as discussed in \cite[Section 4.1]{albrecher2019inhomogeneous}. Note, however, that the two parametrizations differ, {\color{black}and for a general $\alpha$}, the underlying sub-intensity structure is not maintained. 
\hfill $\Box$
\end{example}

\subsection{Tail behavior of scaled PH distributions}\label{sec:tail}
The tail behavior of the CPH class can be studied by exploiting its representation \eqref{conditional}. In this section we use as before the notation $V=1/\Theta$. Some asymptotics of this type of construction were treated in \cite{rojas2018asymptotic}. We now recall some of these results, extend them, and provide a counter-example for a conjecture posed there.

One of the main observations in \cite{rojas2018asymptotic} is that if $V$ is unbounded, then $X$ is heavy-tailed in the sense that $\limsup_{x \to \infty} \overline{F}_X(x) \exp({\epsilon x}) = \infty $ for all $\epsilon>0$. Perhaps the most well-known class of heavy-tailed distributions is the regularly varying class denoted by $\mathcal{R}$.
Recall that a random variable $Z$ and its distribution function $F_Z$ are called regularly varying with index $\alpha > 0$ (we write $F_Z\in \mathcal{R}_{-\alpha}$) if 
\begin{align*}
	\overline{F}_{Z}(x) = x^{-\alpha} L(x) \,, \quad x>0 \,,
\end{align*}
where $L$ is a slowly varying function, that is, $L(cx)/L(x) \to 1 $ as $x \to \infty$ for all $c>0$. 

 A standard result to obtain the tail behavior of a product of independent random variables, when one of the components is regularly varying, is Breiman's lemma \citep{breiman1965some}: 
\begin{lemma}
Let $F_Z\in \mathcal{R}_{-\alpha}$, and assume that $\E[Y^{\alpha + \epsilon}]<\infty$, for some $\epsilon>0$. Then $F_{ZY}\in \mathcal{R}_{-\alpha}$, and
\begin{align*}
\overline F_{ZY}(x)\sim \E[Y^{\alpha}]\overline F_{Z}(x)\,, \quad x\to \infty \,.
\end{align*}
\end{lemma}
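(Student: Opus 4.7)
The plan is to start from the conditional representation
\[
\overline{F}_{ZY}(x)=\E\bigl[\overline{F}_{Z}(x/Y)\bigr],
\]
obtained by conditioning on $Y$ and using the independence of $Z$ and $Y$, and then to divide by $\overline{F}_Z(x)$ to isolate a ratio amenable to the regular variation hypothesis. Concretely, I would write
\[
\frac{\overline{F}_{ZY}(x)}{\overline{F}_{Z}(x)}
=\E\!\left[\frac{\overline{F}_{Z}(x/Y)}{\overline{F}_{Z}(x)}\right]
\]
and show that the integrand converges pointwise to $Y^{\alpha}$ as $x\to\infty$, while being dominated by a fixed integrable random variable. The limit of the expectation is then $\E[Y^\alpha]$, which yields both the asserted tail equivalence and, since $\overline{F}_{Z}\in\mathcal{R}_{-\alpha}$, the fact that $F_{ZY}\in\mathcal{R}_{-\alpha}$.

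The pointwise convergence is direct: for each outcome with $Y=y>0$, regular variation of $\overline{F}_Z$ of index $-\alpha$ gives $\overline{F}_{Z}(x/y)/\overline{F}_{Z}(x)\to y^{\alpha}$ as $x\to\infty$. The main obstacle is constructing a dominating function to justify the interchange of limit and expectation. For this, I would invoke Potter's bounds for regularly varying functions: for any $\delta>0$ there exist $C_\delta>0$ and $x_0$ such that, for all $x\geq x_0$ and all $y>0$ with $x/y\geq x_0$,
\[
\frac{\overline{F}_{Z}(x/y)}{\overline{F}_{Z}(x)}\leq C_\delta\max\!\bigl(y^{\alpha+\delta},\,y^{\alpha-\delta}\bigr).
\]
Choosing $\delta=\epsilon$ and bounding the maximum by the sum, the integrand is dominated by $C_\epsilon(Y^{\alpha+\epsilon}+Y^{\alpha-\epsilon})$, which is integrable because $\E[Y^{\alpha+\epsilon}]<\infty$ and since $Y\geq 0$ the elementary inequality $y^{\alpha-\epsilon}\leq 1+y^{\alpha+\epsilon}$ for $y\geq 0$ and $0\leq \alpha-\epsilon\leq \alpha+\epsilon$ (and a trivial upper bound by $y^{\alpha+\epsilon}\vee 1$ if $\alpha-\epsilon<0$) ensures $\E[Y^{\alpha-\epsilon}]<\infty$. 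The region where $x/Y<x_0$ (i.e.\ $Y>x/x_0$) contributes negligibly since $\overline{F}_{Z}(x/Y)\leq 1$ and $\P(Y>x/x_0)=o(\overline{F}_Z(x))$ by Markov's inequality applied with the $(\alpha+\epsilon)$-th moment of $Y$ against $x^{-\alpha}L(x)$.

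With the dominating bound in hand, dominated convergence delivers
\[
\lim_{x\to\infty}\frac{\overline{F}_{ZY}(x)}{\overline{F}_{Z}(x)}
=\E\!\left[\lim_{x\to\infty}\frac{\overline{F}_{Z}(x/Y)}{\overline{F}_{Z}(x)}\right]
=\E[Y^{\alpha}],
\]
which is the claimed equivalence. Regular variation of $\overline{F}_{ZY}$ with the same index $-\alpha$ is then immediate: for any $c>0$,
\[
\frac{\overline{F}_{ZY}(cx)}{\overline{F}_{ZY}(x)}=\frac{\overline{F}_{ZY}(cx)/\overline{F}_{Z}(cx)}{\overline{F}_{ZY}(x)/\overline{F}_{Z}(x)}\cdot\frac{\overline{F}_{Z}(cx)}{\overline{F}_{Z}(x)}\longrightarrow c^{-\alpha}
\]
as $x\to\infty$, completing the argument. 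The crux of the proof is therefore the Potter-type uniform bound; everything else is bookkeeping.
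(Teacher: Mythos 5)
The paper does not actually prove this lemma: it is quoted verbatim as Breiman's classical result with a citation to \cite{breiman1965some}, so there is no in-paper argument to compare yours against. Your proof is the standard one for this result --- condition on $Y$ to write $\ov{F}_{ZY}(x)=\E[\ov{F}_Z(x/Y)]$, establish pointwise convergence of $\ov{F}_Z(x/Y)/\ov{F}_Z(x)$ to $Y^\alpha$, dominate via Potter's bounds, and dispose of the region $\{Y>x/x_0\}$ by Markov's inequality together with $x^{-\alpha-\epsilon}=o(x^{-\alpha}L(x))$ --- and the overall structure is sound.

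One step as written is incorrect, though easily repaired. You take $\delta=\epsilon$ in the Potter bound and then need $\E[Y^{\alpha-\epsilon}]<\infty$, for which you offer the bound $y^{\alpha-\epsilon}\le y^{\alpha+\epsilon}\vee 1$ ``if $\alpha-\epsilon<0$''. That inequality fails for small $y$: when $\alpha-\epsilon<0$ one has $y^{\alpha-\epsilon}\to\infty$ as $y\downarrow 0$ while $y^{\alpha+\epsilon}\vee 1=1$ there, and $\E[Y^{\alpha-\epsilon}]$ can genuinely be infinite (e.g.\ $Y$ exponential, whose density is bounded away from zero near the origin, with $\epsilon-\alpha\ge 1$ --- a case fully allowed by the hypotheses, since all positive moments of such a $Y$ are finite). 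The hypotheses only control positive moments of $Y$, so the dominating function must involve only exponents in $[0,\alpha+\epsilon]$. The fix is to choose $\delta=\tfrac12\min(\epsilon,\alpha)>0$ (recall $\alpha>0$ in the paper's definition of $\mathcal{R}_{-\alpha}$); then $0<\alpha-\delta<\alpha+\delta\le\alpha+\epsilon$, both $y^{\alpha-\delta}$ and $y^{\alpha+\delta}$ are bounded by $1+y^{\alpha+\epsilon}$, and your dominated convergence argument goes through verbatim. With that adjustment the proof is complete and correct.
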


\begin{example}\rm
For the Matrix-Pareto type II distribution with tail \eqref{gammmix} obtained by Gamma mixing, we have that $1/\Theta\in \mathcal{R}_{-\alpha}$, so that $ \bfpi  ( \mat{I} - x \bfT )^{-\alpha}\bfe\sim Cx^{-\alpha}$ as $x\to \infty$. One can, of course, also prove this directly by decomposing the matrix into Jordan blocks.\hfill $\Box$
\end{example}

\begin{example}\rm
If  $\Theta\sim \text{PH}(\bfpi_0,\bfT_0)$, we can show from straightforward calculations and using \eqref{PHtail_expansion} that $1/\Theta$ has tail behavior
\begin{align*}
\overline F_{1/\Theta}(x)\sim\sum_{j=1}^{m} \sum_{k=0}^{\kappa_{j}-1}\frac{1}{x^{k}} b^{0}_{j k}\,,\quad x\to\infty\,,
\end{align*}
where $b_{jk}^0$ are real-valued constants. This shows that $1/\Theta\in \mathcal{R}_{-\alpha}$ for some non-negative integer $\alpha$. In particular, the ratio of any two PH distributed random variables is always regularly varying of integer order.\hfill $\Box$
\end{example}

We say that $Y$ is $\alpha$-regular variation determining ($\alpha$-rvd) if $F_Z\in\mathcal{R}_{-\alpha}$ whenever $F_{YZ}\in\mathcal{R}_{-\alpha}$.  It has been noted in \cite{rojas2018asymptotic} that special cases of PH distributions are $\alpha$-rvd. A converse of Breiman's lemma was given in \cite{jacobsen2009inverse} as follows. 

\begin{proposition}
Let $Y$ be a positive random variable with $\E[Y^{\alpha + \epsilon}]<\infty$ for some $\epsilon>0$. Then $Y$ is $\alpha$-rvd if and only if
\begin{align*}
\E[Y^{\alpha + i\eta}]\neq 0\,,\quad  \eta \in \mathbb{R}\,,
\end{align*}
where $\alpha +i\eta\in\mathbb{C}$, that is, $i^2=-1$.
\end{proposition}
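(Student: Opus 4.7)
The plan is to translate the statement into an additive-convolution problem via a logarithmic change of variable, and then invert the convolution through a Wiener-type Tauberian argument. The natural tool is the Mellin transform $\psi_W(s) := \E[W^s]$, which turns multiplicative convolution into ordinary multiplication: $\psi_{YZ}(s) = \psi_Y(s)\psi_Z(s)$. The moment assumption $\E[Y^{\alpha+\epsilon}]<\infty$ means $\psi_Y$ is analytic in a vertical strip containing $\{\mathrm{Re}(s)=\alpha\}$, so that the boundary values $\psi_Y(\alpha+i\eta) = \E[Y^{\alpha+i\eta}]$ are well-defined, and the hypothesis is precisely the non-vanishing of $\psi_Y$ on this line.

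First I would set $U := \log Y$, $V := \log Z$, and $W := U+V = \log(YZ)$. Regular variation $\overline{F}_Z(x) = x^{-\alpha}L(x)$ then becomes $\P(V>t) = e^{-\alpha t}L(e^t)$, i.e.\ exponential decay with a slowly varying remainder, and similarly for $W$. Next I would exponentially tilt the law of $U$ by defining the finite Borel measure $d\nu(u) := e^{\alpha u}\,\P(U\in du)$ on $\R$ (finite by the moment hypothesis). Its Fourier transform equals $\hat{\nu}(\eta) = \E[Y^{\alpha+i\eta}] = \psi_Y(\alpha+i\eta)$, so the assumption of the proposition is precisely that $\hat{\nu}$ has no real zero. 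The convolution identity $\P(U+V>t) = \int \P(V>t-u)\,\P(U\in du)$, multiplied through by $e^{\alpha t}$, yields the clean additive equation $g = h*\nu$ on $\R$, with $g(t):=e^{\alpha t}\P(W>t)$ and $h(t):=e^{\alpha t}\P(V>t)$.

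For the \emph{sufficiency} direction I would assume $F_{YZ}\in\mathcal{R}_{-\alpha}$, so $g(t) \sim \tilde{L}(e^t)$ for some slowly varying $\tilde{L}$; the non-vanishing of $\hat{\nu}$ then permits a Wiener-type deconvolution yielding $h(t) \sim \tilde{L}(e^t)/\nu(\R) = \tilde{L}(e^t)/\E[Y^\alpha]$ (consistent with the ratio predicted by Breiman's lemma). Undoing the logarithm gives $\overline{F}_Z(x) \sim x^{-\alpha}\tilde{L}(x)/\E[Y^\alpha]$, hence $F_Z\in\mathcal{R}_{-\alpha}$. For the \emph{necessity} direction I would argue by contrapositive: if $\hat{\nu}(\eta_0)=0$ for some $\eta_0\in\R$, construct a law $F_Z$ whose tail carries a small oscillatory perturbation of the form $1 + c\cos(\eta_0\log x)$ on top of a regularly varying baseline, with $c$ chosen small enough that the result remains a valid survival function. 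Such a factor is not slowly varying, so $F_Z\notin\mathcal{R}_{-\alpha}$, yet the oscillatory mode at frequency $\eta_0$ lies in the kernel of convolution against $\nu$ (as $\hat{\nu}(\eta_0)=0$), leaving $F_{YZ}$ with an unperturbed regularly varying tail.

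The main obstacle is the sufficiency Tauberian step. Wiener's theorem in its textbook form applies within $L^1$-algebras, whereas the functions $g,h$ carry slowly varying factors that are not themselves in any such algebra. The delicate point is to propagate the slowly varying remainder through the deconvolution with matching asymptotic constants on both sides; this requires invoking Karamata's representation and a localized form of the Tauberian machinery to exploit the fact that $\tilde L$ is asymptotically constant on bounded $t$-intervals, which is precisely the scale on which the tilted kernel $\nu$ acts effectively. The necessity construction is more elementary, but still requires care to verify that the perturbed tail defines a genuine probability distribution and that the oscillatory factor cannot be absorbed into a slowly varying correction.
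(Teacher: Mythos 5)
First, note that the paper does not prove this proposition at all: it is quoted as ``a converse of Breiman's lemma'' from \cite{jacobsen2009inverse}, so there is no in-paper argument to compare against. Judged on its own terms, your outline correctly identifies the right framework (Mellin transform, logarithmic change of variables, exponential tilting so that the hypothesis becomes non-vanishing of the Fourier transform $\hat{\nu}(\eta)=\E[Y^{\alpha+i\eta}]$ of the tilted law $\nu$), and your \emph{necessity} half is essentially complete: if $\E[Y^{\alpha+i\eta_0}]=0$, take $\ov{F}_Z(x)=x^{-\alpha}\bigl(1+c\cos(\eta_0\log x)\bigr)$ with $c<\alpha/(\alpha+|\eta_0|)$ so the tail is decreasing; since $x^{-\alpha}\cos(\eta_0\log x)=\mathrm{Re}\,(x^{-\alpha-i\eta_0})$ and $\E[(x/Y)^{-\alpha-i\eta_0}]=x^{-\alpha-i\eta_0}\E[Y^{\alpha+i\eta_0}]=0$, the oscillation cancels exactly in $\ov{F}_{YZ}(x)=x^{-\alpha}\E[Y^{\alpha}]$, while $F_Z\notin\mathcal{R}_{-\alpha}$ because $1+c\cos(\eta_0\log x)$ is not slowly varying. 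This is precisely the mechanism behind the paper's own hyperexponential counterexample with $\ov{F}_V(x)=(1+\tfrac12\sin(\log x))x^{-1}$.

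The \emph{sufficiency} half, however, contains a genuine gap that you name but do not close, and it is the entire content of the theorem. Writing $g=h*\nu$ with $g(t)=e^{\alpha t}\P(\log(YZ)>t)\sim\tilde{L}(e^t)$, you cannot invoke a Wiener-type deconvolution for three concrete reasons. (i) Wiener's Tauberian theorem requires an a priori Tauberian condition on $h$ (boundedness together with slow oscillation, or membership in a suitable convolution algebra); here $h(t)=e^{\alpha t}\ov{F}_Z(e^t)$ is exactly the object whose boundedness and regularity you are trying to establish, so the argument as stated is circular. (ii) The kernel $\nu$ is a general finite measure, not an $L^1$ density plus identity; for measure kernels, non-vanishing of the Fourier transform does \emph{not} imply invertibility in the measure algebra (the Wiener--Pitt phenomenon), so ``deconvolution'' is not available off the shelf when $Y$ has atoms or a singular part. (iii) Even where a Tauberian theorem applies, its conclusion is an averaged asymptotic; upgrading to the pointwise statement $h(t)\sim\tilde{L}(e^t)/\E[Y^{\alpha}]$ requires exploiting the monotonicity of $\ov{F}_Z$ in a quantitative way. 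Overcoming (i)--(iii) is exactly the technical core of \cite{jacobsen2009inverse} (their ``cancellation property'' for $\sigma$-finite measures), and gesturing at ``Karamata's representation and a localized form of the Tauberian machinery'' does not substitute for it. As it stands, your proposal proves one implication and only restates the other.
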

We can translate the above condition into the PH setting as follows.

\begin{proposition}
 Let $Y\sim\mbox{PH}(\bfpi,\bfT)$. Then $Y$ is $\alpha$-rvd
 for  $\alpha>0$ if and only if
 \begin{align}\label{rvd_ph}
 \bfpi (-\bfT^{-1})^{\alpha +i\eta}\bfe
\neq 0\,,\quad  \eta \in \mathbb{R}\,.
\end{align}
\end{proposition}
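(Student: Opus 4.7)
The plan is to reduce the proposition directly to the previous one by computing $\E[Y^{\alpha+i\eta}]$ explicitly for $Y\sim\mbox{PH}(\bfpi,\bfT)$ and showing the condition $\E[Y^{\alpha+i\eta}]\neq 0$ is equivalent to \eqref{rvd_ph}. The starting point is part (4) of Proposition \ref{prop:31}, which for real $\nu\geq 0$ gives $\E[Y^\nu]=\Gamma(\nu+1)\bfpi(-\bfT)^{-\nu}\bfe$, and the fact that PH variables possess all moments, so in particular $\E|Y^{\alpha+i\eta}|=\E[Y^\alpha]<\infty$ for every $\alpha>0$ and $\eta\in\mathbb{R}$.

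First I would establish that the moment identity extends to complex exponents, i.e.
$$\E[Y^{\alpha+i\eta}]=\Gamma(\alpha+i\eta+1)\,\bfpi(-\bfT)^{-(\alpha+i\eta)}\bfe,\qquad \alpha>0,\ \eta\in\mathbb{R}.$$
This is an analytic continuation argument in the variable $\nu$. On the left, the map $\nu\mapsto \E[Y^\nu]$ is holomorphic in the half-plane $\{\text{Re}(\nu)>0\}$: this follows by dominated convergence and Morera's theorem, using the finite envelope $|Y^\nu|=Y^{\text{Re}(\nu)}$ together with the finiteness of all moments of $Y$. On the right, $\Gamma(\nu+1)$ is entire (and nonvanishing), and since the eigenvalues of $-\bfT$ all lie in the open right half-plane, one can define $(-\bfT)^{-\nu}=\exp(-\nu\log(-\bfT))$ through the Cauchy-type functional calculus recalled in the introduction (using a closed path encircling the spectrum of $-\bfT$ and the principal branch of the logarithm, which is analytic on this spectrum). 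This yields an analytic map $\nu\mapsto \bfpi(-\bfT)^{-\nu}\bfe$ on the same half-plane. The two analytic functions agree for real $\nu>0$ by Proposition \ref{prop:31}(4), and hence coincide throughout, by the identity theorem.

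With this identity in hand, the proposition is immediate from the previous one: $Y$ is $\alpha$-rvd if and only if $\E[Y^{\alpha+i\eta}]\neq 0$ for every $\eta\in\mathbb{R}$, and since the Gamma function has no zeros, this is equivalent to
$$\bfpi(-\bfT)^{-(\alpha+i\eta)}\bfe\neq 0,\qquad \eta\in\mathbb{R},$$
which coincides with \eqref{rvd_ph} upon writing $(-\bfT)^{-1}=-\bfT^{-1}$ and raising to the power $\alpha+i\eta$ via the same functional calculus.

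The main obstacle is the second step, namely rigorously justifying the analytic continuation of the moment formula to complex powers. The delicate point is the definition of the matrix power $(-\bfT)^{-(\alpha+i\eta)}$: the principal logarithm of $-\bfT$ must be well-defined and analytic in the exponent, which is where the sign convention on $\bfT$ and the location of its spectrum in the open left half-plane become essential. Once this is secured, everything reduces to a clean application of the Jacobsen–Mikosch converse of Breiman's lemma together with the zero-free property of $\Gamma$.
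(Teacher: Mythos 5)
Your argument is correct and reaches the same conclusion by the same overall strategy (reduce to the Jacobsen--Mikosch converse of Breiman's lemma and use that $\Gamma$ never vanishes), but the key identity $\E[Y^{\alpha+i\eta}]=\Gamma(\alpha+i\eta+1)\,\bfpi(-\bfT)^{-(\alpha+i\eta)}\bfe$ is established by a genuinely different route. The paper obtains it in one line of direct functional calculus: it computes the Laplace transform of $w(x)=x^{\alpha+i\eta}$, namely $\mathcal{L}_w(s)=\Gamma(\alpha+i\eta+1)s^{-(\alpha+i\eta+1)}$, and then writes $\E[w(Y)]=\bfpi\,\mathcal{L}_w(-\bfT)\,\bft$, which simplifies to the stated form using $\bft=-\bfT\bfe$. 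You instead prove the same identity by analytic continuation in the exponent: holomorphy of $\nu\mapsto\E[Y^\nu]$ on $\{\mathrm{Re}(\nu)>0\}$ via Morera and the envelope $|Y^\nu|=Y^{\mathrm{Re}(\nu)}$, holomorphy of $\nu\mapsto\Gamma(\nu+1)\bfpi(-\bfT)^{-\nu}\bfe$ via the principal matrix logarithm, agreement on the positive reals from Proposition~\ref{prop:31}(4), and the identity theorem. Your route is longer but makes explicit the justification that the paper compresses into the phrase ``by functional calculus'' (in particular, why the complex matrix power is well defined given the location of the spectrum of $-\bfT$); the paper's route is shorter and avoids any continuation argument by evaluating the complex moment directly. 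Both are valid, and both correctly use that PH distributions have all moments finite, as required by the hypothesis of the converse Breiman lemma.
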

\begin{proof}
First observe that the function
\begin{align*}
w(x)= x^{\alpha +i\eta}\,,\quad x>0\,,
\end{align*}
has a well-defined Laplace transform for any $\alpha>0$, given by
\begin{align*}
\mathcal{L}_w(s)=\int_0^\infty \exp({-sx})x^{(\alpha+i\eta+1)-1}dx=\frac{\Gamma(\alpha+i\eta+1)}{s^{\alpha+i\eta+1}}\,,\quad s>0\,.
\end{align*}
Consequently, by functional calculus,
\begin{align*}
\E[w(Y)]=\bfpi \mathcal{L}_w(-\bfT)\bft=\Gamma(\alpha+i\eta+1)\bfpi (-\bfT^{-1})^{\alpha +i\eta}\bfe \,.
\end{align*}
Since the $\Gamma$-function never vanishes, the result follows.
\end{proof}

It was conjectured in \cite{rojas2018asymptotic} that any PH distribution is $\alpha$-rvd for any $\alpha>0$. Here we provide a counterexample.

	\begin{example}[Hyperexponential]\rm
	
Consider the following sub-intensity matrix corresponding to a hyperexponential PH distribution
\[   \mat{T} = \begin{pmatrix}
-1 & 0 \\
0 & -\exp({-\pi})
\end{pmatrix} . \]
Then its associated Green matrix is given by
\[  \mat{U}= -\mat{T}^{-1} = \begin{pmatrix}
1 & 0 \\
0 & \exp({\pi})
\end{pmatrix} , \]
and thus we may easily calculate its complex power as ({\color{black} cf. \cite{doolittle1998analytic}})
\[   \mat{U}^{\alpha + i} =  \begin{pmatrix}
1 & 0 \\
0 & \exp({ (\alpha +i)\pi})
\end{pmatrix}  = \begin{pmatrix}
1 & 0 \\
0 & -\exp({\alpha \pi})
\end{pmatrix}  . \]
Therefore
\[   \mat{U}^{\alpha+i}\vect{e} = \begin{pmatrix}
1 \\
-\exp({\alpha \pi})
\end{pmatrix} , \]
and {\color{black}defining} the initial vector as
\[  \vect{\pi}= \left(  \frac{\exp({\alpha \pi})}{1+\exp({\alpha \pi}) } , 1-  \frac{\exp({\alpha \pi})}{1+\exp({\alpha \pi}) }  \right)\,,  \]
we then have that
\[   \vect{\pi}\mat{U}^{\alpha+i}\vect{e} = 0 \,.  \]
In fact, we may always construct a PH distribution such that the property \eqref{rvd_ph} fails for a  pre-specified $\eta\in\mathbb{R}$. This is achieved by considering
\[   \mat{T} = \begin{pmatrix}
-1 & 0 \\
0 & -\exp({-\pi/\eta})
\end{pmatrix}    \]
and
\[   \vect{\pi} = \left(  \frac{\exp({\alpha \pi/\eta})}{1+\exp({\alpha \pi/\eta}) } , 1-  \frac{\exp({\alpha \pi/\eta})}{1+\exp({\alpha \pi/\eta}) }  \right)  , \]
and  again $\vect{\pi}\mat{U}^{\alpha+i\eta}\vect{e}=0$.
 \hfill $\Box$
\end{example}

\begin{example}\rm
{\color{black} We now provide a concrete example of an explicit distribution which multiplied by a hyperexponential is regularly varying but itself is not. 
Consider $Y$ following a hyperexponential PH distribution with parameters 
\[   \vect{\pi} = \left(  \frac{\exp({ \pi})}{1+\exp({ \pi}) } , 1-  \frac{\exp({ \pi})}{1+\exp({ \pi}) }  \right)  , \]
and 
\[   \mat{T} = \begin{pmatrix}
-1 & 0 \\
0 & -\exp({-\pi})
\end{pmatrix} . \]
Now, take $V$ with tail function given by
\begin{align*}
	\ov{F}_V (x) = \left(1 + \frac{1}{2} \sin(\log(x)) \right) x^{-1} \,.
\end{align*}
Note that this distribution is not regularly varying. Then the product $VY$ has tail distribution
\begin{align*}
	\ov{F}_{VY}(x) &= \P( VY > x) 
	= x ^{-1} \int_0^\infty y f_Y(y)dy 
	= x ^{-1} \bfpi (- \bfT)^{-1} \bfe \,,
\end{align*}
meaning that $\ov{F}_{VY}$ is regularly varying with index $-1$. 
In fact, the above construction can be seen as a special case of Example 5 in  \cite{maulik2004characterizations}. Note that this construction hinges on having at least two mixing components, in line with the previous example.
}
\end{example}





{ 
\subsubsection{Subexponentiality and other classes of heavy-tailed distributions}

We now study other types of tail behavior, such as Weibull-type and lognormal-type, which fall into the Gumbel max-domain of attraction. We consider the following two examples.
\begin{definition}[Weibull-type tails] \rm 
A distribution function $F$ is in the Weibull-type class if 
\begin{align*}
	\ov{F}(x) \sim c x^{\beta} \exp (- \lambda  x ^{\tau} )\,, \quad x\to \infty \,,
\end{align*}
	for some constants $\beta \in \mathbb{R}$ and $\tau, \, \lambda , \, c > 0$. A Weibull-type distribution is heavy-tailed if $\tau \in (0,1)$ and light-tailed otherwise.  
	If $V$ is  Weibull-type  with parameter $\tau > 0$, then $X=VY$ is Weibull-type with parameter $\tau/(\tau+1)$, see \cite{arendarczyk2011asymptotics}. In particular, and in contrast to the regularly varying case, random scaling results in a distribution with a heavier tail than each of the two components (but still of Weibull-type).
\end{definition}

\begin{definition}[Lognormal--type tails] \rm 
A distribution function $F$ is in the lognormal--type class if 
\begin{align*}
	\ov{F}(x) \sim c x^{\beta} (\log x)^{\xi}\exp (- \lambda (\log x )^{\gamma} )\,, \quad x\to \infty\,,
\end{align*}
	for some constants $\beta, \xi \in \R$, $\gamma > 1$ and $\lambda , \, c > 0$, and we write $F \in \mbox{LN}(\gamma)$. In particular, the lognormal distribution belongs to  $\mbox{LN}(2)$. In \cite{rojas2018asymptotic}, it was shown that if $V$ has a standard lognormal tail, then 
	\begin{align*}
		\frac{\P(VY>x) }{\P(V>x)} \to \infty \,, \quad x \to \infty \,,
	\end{align*}
	 and $X=VY$ is subexponential (see below). 
\end{definition}

The regularly varying class, the Weibull-type class (with $\tau \in (0,1)$) and the lognormal-type class are subclasses of the so-called subexponential class $\mathcal{S}$. Recall that $F\in\mathcal{S}$ if $\lim_{x \to \infty} \overline{F}^{*2}(x)/\overline{F}(x) = 2 $, where {\color{black}${F}^{*2}=F\ast F$} denotes the $2$-fold convolution of $F$. The property can be shown to extend to $n$-fold convolutions, $n\in\mathbb{N}$. We now focus on finding more general conditions under which a CPH distribution has subexponential tail behavior. The following result  provides sufficient conditions for subexponentiality of the product of independent random variables. 

\begin{theorem}[Theorem 2.1 in \cite{cline1994subexponentiality}]\label{closesubexp}
	Let $V$ and $Y$ be independent non-negative random variables with distribution functions $F_V$ and $F_Y$, respectively, {\color{black} and with $Y$ not degenerate at zero.} Let $F_{VY}$ be the distribution of the product $VY$. Assume that $F_V \in \mathcal{S}$. If there is a function $a: (0,\infty) \to (0, \infty)$ such that: 
	\begin{enumerate}
		\item $a(x) \uparrow \infty$ as $x \to \infty$;
		\item $x/a(x) \uparrow \infty$ as $x \to \infty$;
		\item $\lim_{x \to \infty}\ov{F}_V(x-a(x))/{\ov{F}_V(x)}=1$;
		\item $\lim_{x \to \infty}{\ov{F}_Y(a(x))}/{\ov{F}_{VY}(x)}=0$;
	\end{enumerate}
	then $F_{VY} \in \mathcal{S}$.
\end{theorem}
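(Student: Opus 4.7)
The plan is to reduce the subexponentiality of $F_{VY}$ to that of $F_V$ by using the auxiliary function $a$ to truncate the scaling variable $Y$ at a level where condition (4) makes the tail of $Y$ negligible compared to that of the product.

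First, I would establish long-tailedness of $F_{VY}$, namely $\ov{F}_{VY}(x-c)\sim\ov{F}_{VY}(x)$ for every constant $c>0$. Using the representation $\ov{F}_{VY}(x)=\int_0^\infty \ov{F}_V(x/y)\,dF_Y(y)$ and splitting the integral at $y=a(x)$, the tail piece $\{y>a(x)\}$ contributes at most $\ov{F}_Y(a(x))=o(\ov{F}_{VY}(x))$ by condition (4), while on $\{y\le a(x)\}$ the substitution $x\mapsto x-c$ alters the integrand by a factor converging uniformly to $1$ as a consequence of condition (3) together with the monotonicity conditions (1)--(2) (which ensure $x/y\to\infty$ uniformly for $y\le a(x)$).

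Second, for subexponentiality I would write $\overline{F^{*2}_{VY}}(x)=\Prob(V_1Y_1+V_2Y_2>x)$ with $(V_i,Y_i)$ independent copies of $(V,Y)$. The bound $\liminf_{x\to\infty}\overline{F^{*2}_{VY}}(x)/\ov{F}_{VY}(x)\ge 2$ follows from long-tailedness via Fatou's lemma applied to
\begin{align*}
\overline{F^{*2}_{VY}}(x)=\ov{F}_{VY}(x)+\int_0^x\ov{F}_{VY}(x-u)\,dF_{VY}(u).
\end{align*}

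Third, for the matching upper bound, I would decompose according to whether $\max(Y_1,Y_2)\le a(x)$ or not. The event $\{\max(Y_1,Y_2)>a(x)\}$ contributes $O(\ov{F}_Y(a(x)))$ to $\overline{F^{*2}_{VY}}(x)$ (after a routine peeling using long-tailedness to handle the other summand), which is $o(\ov{F}_{VY}(x))$ by condition (4). On the complementary event, conditioning on $(Y_1,Y_2)=(y_1,y_2)\in[0,a(x)]^2$ reduces matters to the subexponential convolution estimate
\begin{align*}
\Prob(y_1V_1+y_2V_2>x)\sim \ov{F}_V(x/y_1)+\ov{F}_V(x/y_2),
\end{align*}
valid because scaled copies of the subexponential $F_V$ remain in $\mathcal{S}$ and tails of independent subexponentials add. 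Integrating over the joint distribution of $(Y_1,Y_2)$ restricted to $[0,a(x)]^2$ then yields $2\ov{F}_{VY}(x)+o(\ov{F}_{VY}(x))$, closing the argument.

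The main obstacle is obtaining the required uniformity in $(y_1,y_2)$ when transferring the subexponential convolution property onto the growing region $[0,a(x)]^2$, and then interchanging the asymptotic equivalence with the integration against $dF_Y\otimes dF_Y$. Conditions (1)--(3) are tailored precisely for this: (1) and (2) guarantee $x/y_i\to\infty$ uniformly on the relevant domain, while (3) supplies the insensitivity of $\ov{F}_V$ to additive shifts of order $a(x)$ that propagates through the conditioning; (4) then absorbs the contribution of the discarded region.
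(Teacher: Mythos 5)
The paper itself offers no proof of this statement: it is imported verbatim as Theorem 2.1 of \cite{cline1994subexponentiality}, so your attempt can only be measured against the original source. Your outline does reproduce the architecture of that proof (lower bound for $\overline{F^{*2}_{VY}}(x)/\ov{F}_{VY}(x)$ via long-tailedness and Fatou; upper bound by splitting on $\{\max(Y_1,Y_2)\le a(x)\}$ and discarding the complement, which is bounded by $2\ov{F}_Y(a(x))=o(\ov{F}_{VY}(x))$ by condition (4)). The plan is sound, but two steps are genuinely missing, and they are exactly the places where conditions (1)--(3) have to do real work.

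First, in the long-tailedness step you assert that for $y\le a(x)$ the ratio $\ov{F}_V((x-c)/y)/\ov{F}_V(x/y)$ tends to $1$ uniformly. This is false as stated: the additive shift is $c/y$, which blows up as $y\downarrow 0$, and conditions (1)--(3) only control shifts of size $a(u)$ at argument $u$; there is no reason why $c/y\le a(x/y)$ for small $y$. Concretely, for $\ov{F}_V(u)=\exp(-\sqrt{u})\in\mathcal{S}$ and $y=x^{-2}$ one gets $\ov{F}_V((x-c)/y)/\ov{F}_V(x/y)=\exp\bigl(\sqrt{x^3}-\sqrt{x^3-cx^2}\,\bigr)=\exp\bigl(\tfrac{c}{2}\sqrt{x}+o(1)\bigr)\to\infty$. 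The small-$y$ region must be handled by a separate argument (using the non-degeneracy of $Y$ to lower-bound $\ov{F}_{VY}(x)$), not by uniform insensitivity. Second, and more importantly, the decisive estimate --- that $\Prob(y_1V_1+y_2V_2>x)\sim \ov{F}_V(x/y_1)+\ov{F}_V(x/y_2)$ holds \emph{uniformly} over the growing square $(0,a(x)]^2$, and that this equivalence can be integrated against $dF_Y\otimes dF_Y$ --- is asserted rather than proved. Even for fixed $(y_1,y_2)$ the additivity of tails is not a triviality for two distinct subexponential laws; it holds for two scalings of a common $F_V\in\mathcal{S}$, but that already requires an argument, and the uniformity over $y_i\le a(x)$ is the entire technical content of the theorem (it occupies the supporting lemmas preceding Theorem 2.1 in \cite{cline1994subexponentiality}, where (1)--(3) are deployed via a three-way convolution split). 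You correctly flag this as the main obstacle and then declare it resolved by fiat, so what you have is a correct proof plan rather than a proof.
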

\begin{remark} \rm 
	A distribution function $F$ that satisfies ${\ov{F}(x \pm h(x))}\sim {\ov{F}(x)}$ as $x \to \infty$ is called $h$-insensitive.  Thus, Condition (3) in Theorem~\ref{closesubexp} translates to $F_V$ being $a$-insensitive, where $a(\cdot)$ satisfies Conditions (1) and (2). We refer to \cite{foss2011introduction} for further reading on $h$-insensitive distribution functions.
\end{remark}

The following Corollary provides sufficient conditions for subexponential behavior of scaled PH distributions. 
\begin{corollary}\label{closesubexp_ph}
	Let $V$ be a non-negative random variable with distribution function $F_V \in \mathcal{S}$, and let $Y\sim \mbox{PH}(\bfpi, \bfT )$ independent of $V$. Let $a: (0,\infty) \to (0, \infty)$ be a function such that: 
	\begin{enumerate}
		\item $a(x) \uparrow \infty$ as $x \to \infty$;
		\item $x/a(x) \uparrow \infty$ as $x \to \infty$;
		\item $\sqrt{x}/a(x)\to 0$ as $x \to \infty$;
		\item $\lim_{x \to \infty}{\ov{F}_V(x-a(x))}/{\ov{F}_V(x)}=1$.
	\end{enumerate}
	Then $X=VY$ has subexponential tail behavior.
\end{corollary}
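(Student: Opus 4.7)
\textit{Proof plan.} The plan is to invoke Theorem \ref{closesubexp} with the given $V$, $Y$, and $a(\cdot)$. Conditions (1) and (2) of that theorem are conditions (1) and (2) of the corollary, while its condition (3) coincides with condition (4) of the corollary. Thus the only nontrivial task will be to verify condition (4) of Theorem \ref{closesubexp}, namely
\begin{equation*}
\lim_{x\to\infty}\frac{\ov F_Y(a(x))}{\ov F_{VY}(x)}=0,
\end{equation*}
and the extra hypothesis $\sqrt x/a(x)\to 0$ of the corollary will enter precisely at this step.

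For the numerator I will use that $Y\sim\mbox{PH}(\bfpi,\bfT)$, so by the tail expansion \eqref{PHtail_asymptotic} there exist constants $c>0$ and $\lambda>0$ (with $-\lambda$ the dominant real eigenvalue of $\bfT$) such that $\ov F_Y(y)\sim c\,y^{n-1}\exp(-\lambda y)$, yielding
\begin{equation*}
\ov F_Y(a(x))\le 2c\,a(x)^{n-1}\exp(-\lambda a(x))
\end{equation*}
for all large $x$. For the denominator I will use a splitting lower bound at the scale $\sqrt x$, which by independence and positivity gives
\begin{equation*}
\ov F_{VY}(x)\;\ge\;\P\bigl(V\ge\sqrt x,\,Y\ge\sqrt x\bigr)\;=\;\ov F_V(\sqrt x)\,\ov F_Y(\sqrt x).
\end{equation*}
The second factor is again controlled by the PH asymptotic, giving $\ov F_Y(\sqrt x)\ge (c/2)\,x^{(n-1)/2}\exp(-\lambda\sqrt x)$ eventually. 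For the first factor I will invoke the standard inclusion $\mathcal S\subset\mathcal L$: since $F_V$ is long-tailed, writing $g(y)=-\log\ov F_V(y)$ gives $g(y)-g(y-1)\to 0$, and a Ces\`aro--Stolz argument on the non-decreasing function $g$ then yields $g(y)/y\to 0$. Hence for every fixed $\epsilon>0$, $\ov F_V(\sqrt x)\ge\exp(-\epsilon\sqrt x)$ for all sufficiently large $x$.

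Combining the three estimates I expect to obtain, for some constant $C>0$,
\begin{equation*}
\frac{\ov F_Y(a(x))}{\ov F_{VY}(x)}\;\le\;C\left(\frac{a(x)}{\sqrt x}\right)^{\!n-1}\exp\!\Bigl(-\lambda a(x)+(\lambda+\epsilon)\sqrt x\Bigr),
\end{equation*}
and rewriting the exponent as $-\lambda a(x)\bigl(1-(1+\epsilon/\lambda)\sqrt x/a(x)\bigr)$ shows that it tends to $-\infty$ by condition (3). The exponential decay in $a(x)$ then absorbs the polynomial prefactor (since $(a(x)/\sqrt x)^{n-1}\le a(x)^{n-1}$), so the ratio tends to $0$, which is condition (4) of Theorem \ref{closesubexp}; hence $F_{VY}\in\mathcal S$. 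The main obstacle lies precisely in the choice of lower bound: the naive estimate $\ov F_{VY}(x)\ge \ov F_V(x)\,\P(Y\ge 1)$ is too weak, because $F_V$ may decay as slowly as $e^{-\epsilon x}$ for arbitrary small $\epsilon>0$ while $a(x)=o(x)$, so the PH factor $e^{-\lambda a(x)}$ need not dominate. Splitting at the scale $\sqrt x$ trades a negligible loss in $V$ (absorbed by long-tailedness) for a far more favorable PH decay rate, and this trade-off is legitimate exactly because $\sqrt x\ll a(x)$.
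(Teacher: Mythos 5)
Your proposal is correct and follows essentially the same route as the paper: the paper also reduces the claim to condition (4) of Theorem~\ref{closesubexp}, bounds $\ov{F}_{VY}(x)$ from below by $\ov{F}_V(x^{1/2})\,\ov{F}_Y(x^{1/2})$, uses the PH asymptotic \eqref{PHtail_asymptotic} together with $\sqrt{x}/a(x)\to 0$, and concludes via the fact that $F_V\in\mathcal{S}$ implies $\exp(\epsilon x)\ov{F}_V(x)\to\infty$. The only difference is cosmetic: you prove that last fact by a Ces\`aro--Stolz argument on $-\log\ov{F}_V$, whereas the paper cites it directly from Foss et al.
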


\begin{proof}
Since $a(\cdot)$ satisfies Conditions (1)-(3) of Theorem \ref{closesubexp}, it remains to show that  Condition (4) is also satisfied. Using  $\ov{F}_{V} \left( x^{1/2}\right) \ov{F}_Y\left(  x^{1/2}\right) \leq \ov{F}_{VY} \left( x\right)$ for all $x>0$, \eqref{PHtail_asymptotic}, and  $x^{1/2}/a(x)\to 0$ as $x \to \infty$, we have that
\begin{align*}
	\lim_{x \to \infty}\frac{\ov{F}_Y(a(x))}{\ov{F}_{VY} \left( x\right)} 
	\leq \lim_{x \to \infty} \frac{1}{c x^{(n-1)/2}} \frac{\exp({-\lambda_{0} x^{1/2} })}{ \ov{F}_{V} \left( x^{1/2}\right)} \,
\end{align*}
for any $0<\lambda_{0}< \lambda$.  Since $F_V \in \mathcal{S}$, then $\exp({\epsilon x}) \ov{F}_V(x) \to \infty$ as $x \to \infty$ for all $\epsilon>0$, see \cite[Lemma 2.17]{foss2011introduction}. Thus, we conclude that 
\begin{align*}
	\lim_{x \to \infty}\frac{\ov{F}_Y(a(x))}{\ov{F}_{VY} \left( x\right)} =0 \,.
\end{align*}
It follows then, by Theorem \ref{closesubexp}, that the product $VY$ has subexponential tail behavior. 
\end{proof}

\begin{remark} \rm 
	Note that the conditions on $V$ above are solely in terms of its tail behavior, and even though they seem rather restrictive, they are satisfied by several classes of distributions, including:  intermediate regularly varying distributions, see \cite[Theorem 2.47]{foss2011introduction}, Weibull-type distributions with $\tau \in (0,0.5)$  and lognormal-type distributions with $\gamma>1$. 
	 Note also that $a(x)=M \sqrt{x}$ for some $M>1$ is possible, and then the class of distributions that satisfies the conditions of Corollary~\ref{closesubexp_ph} is closely related to the $\sqrt{x}$-insensitive class of distributions, which can be characterized by means of a convergence on probability, see \cite[Theorem 2.49]{foss2011introduction}. 
\end{remark}

{\color{black}

We now address different subclasses of $\mathcal{S}$, and we start with the $\mathcal{A}$ class of distributions. Recall that $F$ belongs to the $\mathcal{A}$ class if $F \in \mathcal{S}$ and   
\begin{align}
\label{eq:aclass}
	\limsup_{x \to \infty} \frac{\ov{F}(c x)}{\ov{F}( x)} < 1 
\end{align} 
for some $c>1$.

\begin{remark}\rm
	Condition \eqref{eq:aclass} is a mild restriction, and it is satisfied by a vast number of distributions, including those in Table 1.2.6 in \cite{embrechts2013modelling}, making this a relevant subclass of $\mathcal{S}$.
\end{remark}

Note that, in particular, $\mathcal{A} \subset \mathcal{S}$, and both are subclasses of the more general class of long-tailed distributions $\mathcal{L}$ (see the appendix for a definition). The following proposition, which characterizes the tail behavior of a scaled phase-type distribution when the scaling is long-tailed, is key for deriving more explicit tail asymptotics when dealing with different subclasses of $\mathcal{S}$.
\begin{proposition}\label{prop:long}
	Let $V$ be a non-negative random variable with distribution function $F_V \in \mathcal{L}$ and let $Y\sim \mbox{PH}(\bfpi, \bfT)$, independent of $V$. Then $\ov{F}_Y(x)=o\left( \ov{F}_{VY}(bx)\right)$ for all $b>0$.
\end{proposition}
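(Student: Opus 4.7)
The plan is to reduce the statement to a simple pointwise comparison between the tail of $Y$ (which is asymptotically of the form $c x^{n-1} e^{-\lambda x}$ by \eqref{PHtail_asymptotic}) and the tail of $V$ (which, being long-tailed, decays strictly slower than any exponential). The ratio $\ov{F}_Y(x)/\ov{F}_{VY}(bx)$ will be controlled by inserting an elementary lower bound on $\ov{F}_{VY}(bx)$ that isolates a factor of $\ov{F}_V(\cdot)$ evaluated at a constant multiple of $x$.

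First, I would establish the lower bound
\begin{align*}
\ov{F}_{VY}(bx)\ge \P(V>bx,\ Y>1)=\ov{F}_V(bx)\,\ov{F}_Y(1),
\end{align*}
obtained from the inclusion $\{V>bx\}\cap\{Y>1\}\subset\{VY>bx\}$ together with the independence of $V$ and $Y$. Since $Y\sim\mbox{PH}(\bfpi,\bfT)$ is nondegenerate, the constant $\ov{F}_Y(1)$ is strictly positive, so we may absorb it into a multiplicative constant for the remainder of the argument. (If needed for flexibility in $b$, one can replace the threshold $1$ by any fixed $\tau>0$; the qualitative conclusion does not depend on this choice.)

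Next, I would invoke the fact that $F_V\in\mathcal{L}$ implies $F_V$ is heavy-tailed in the strong sense that $e^{sx}\ov{F}_V(x)\to\infty$ as $x\to\infty$ for every $s>0$; this is exactly Lemma 2.17 in \cite{foss2011introduction}, already used in the proof of Corollary~\ref{closesubexp_ph}. Specialising with $y=bx$ and taking $s=\lambda/(2b)$, this yields $\ov{F}_V(bx)\, e^{\lambda x/2}\to\infty$. Combining this with the exact PH tail asymptotics \eqref{PHtail_asymptotic}, namely $\ov{F}_Y(x)\sim c\, x^{n-1}e^{-\lambda x}$, and the elementary bound $x^{n-1}\le e^{\lambda x/2}$ for $x$ sufficiently large, gives
\begin{align*}
\frac{\ov{F}_Y(x)}{\ov{F}_V(bx)}
\;\lesssim\; \frac{c\,x^{n-1}e^{-\lambda x}}{\ov{F}_V(bx)}
\;=\;\frac{c\,x^{n-1}e^{-\lambda x/2}}{\ov{F}_V(bx)\, e^{\lambda x/2}}\cdot \frac{1}{1}
\;\longrightarrow\; 0,
\end{align*}
as $x\to\infty$, since the denominator $\ov{F}_V(bx)\,e^{\lambda x/2}$ diverges while the numerator vanishes.

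Finally, chaining this with the lower bound above,
\begin{align*}
\frac{\ov{F}_Y(x)}{\ov{F}_{VY}(bx)} \;\le\; \frac{1}{\ov{F}_Y(1)}\cdot\frac{\ov{F}_Y(x)}{\ov{F}_V(bx)} \;\longrightarrow\; 0,
\end{align*}
which is precisely the claim $\ov{F}_Y(x)=o(\ov{F}_{VY}(bx))$. I do not foresee a serious obstacle here: the only non-mechanical ingredient is invoking the correct heavy-tail implication of $\mathcal{L}$-membership, and the rest is an exponential-versus-subexponential comparison. The mild care needed is just to cover every $b>0$ uniformly, which is automatic since the choice $s=\lambda/(2b)$ depends only on $b$ and $\lambda$.
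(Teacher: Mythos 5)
Your proof is correct and follows essentially the same route as the paper's: both arguments hinge on the product-set lower bound $\ov{F}_{VY}(bx)\ge \ov{F}_V(c_1 x)\,\ov{F}_Y(c_2)$ (you split the constants as $c_1=b$, $c_2=1$, the paper as $c_1=\lambda_0$, $c_2=b/\lambda_0$), followed by the comparison of the exponential PH tail \eqref{PHtail_asymptotic} against the long-tailed property $e^{\epsilon x}\ov{F}_V(x)\to\infty$. The differences are purely cosmetic, so nothing further is needed.
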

\begin{proof}
	Let $b>0$. Since $Y\sim \mbox{PH}(\bfpi, \bfT )$, we have that
	\eqref{PHtail_asymptotic} holds.  Using \linebreak $\ov{F}_{V}\left(  \lambda_{0}x \right) \ov{F}_Y\left( {b}/{\lambda_{0}}\right) \leq \ov{F}_{VY}\left( b x \right)$ for all $x>0$, we have that 
	\begin{align*}
		\lim_{x \to \infty} \frac{\ov{F}_Y(x)}{\ov{F}_{VY}(bx)} 
	 \leq \lim_{x \to \infty}  \frac{\exp({-\lambda_{0} x})}{\ov{F}_{V}\left(  \lambda_{0}x \right) \ov{F}_Y\left( {b}/{\lambda_{0}}\right) } \,
	\end{align*}
	for any $0<\lambda_{0}<\lambda$. Given that $F_{V} \in \mathcal{L}$, then $\exp({\epsilon x}) \ov{F}_V(x) \to \infty$ as $x \to \infty$ for all $\epsilon>0$, which implies the result. 
\end{proof}

The next result provides sufficient conditions for the tail behavior in the $\mathcal{A}$ class of the product of independent random variables.
\begin{theorem}[Theorem 2.1 in \cite{tang2006subexponentiality}] \label{the:closeA}
	Let $V$ and $Y$ be independent non-negative random variables with distribution functions $F_V$ and $F_Y$, respectively, and let $F_{VY}$ be the distribution function of the product $VY$. If $F_{V} \in \mathcal{A}$ and $\ov{F}_Y(x)=o\left( \ov{F}_{VY}(bx)\right)$ for all $b>0$, then $F_{VY} \in \mathcal{A}$.
\end{theorem}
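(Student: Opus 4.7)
The plan is to verify, in turn, the two defining properties of $F_{VY}\in\mathcal{A}$: the geometric contraction $\limsup_{x\to\infty}\ov{F}_{VY}(cx)/\ov{F}_{VY}(x)<1$ for some $c>1$, and subexponentiality $F_{VY}\in\mathcal{S}$. Since $F_V\in\mathcal{A}$, I first fix constants $c>1$, $\delta\in(0,1)$, and $K>0$ such that $\ov{F}_V(cz)\le\delta\,\ov{F}_V(z)$ for all $z\ge K$; these constants drive both halves of the argument.

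For the contraction, I would condition on $Y$ and split the range of $y$ at $x/K$:
\begin{align*}
\ov{F}_{VY}(cx) \;=\; \int_{0}^{x/K}\ov{F}_V(cx/y)\,dF_Y(y) \;+\; \int_{x/K}^{\infty}\ov{F}_V(cx/y)\,dF_Y(y).
\end{align*}
On the first range $x/y\ge K$, so the $\mathcal{A}$-inequality for $F_V$ applied pointwise under the integral upper bounds that piece by $\delta\,\ov{F}_{VY}(x)$; the second piece is trivially at most $\ov{F}_Y(x/K)$. Invoking the hypothesis $\ov{F}_Y(u)=o(\ov{F}_{VY}(bu))$ with $b=K$ and $u=x/K$ shows that this tail contribution is $o(\ov{F}_{VY}(x))$, and therefore $\limsup_{x\to\infty}\ov{F}_{VY}(cx)/\ov{F}_{VY}(x)\le\delta<1$, which yields one half of the conclusion with the same constant $c$.

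For subexponentiality I would apply Theorem~\ref{closesubexp} with $F_V\in\mathcal{S}$ as the subexponential factor; the problem then reduces to producing a function $a$ satisfying conditions (1)--(4) of that theorem. Since $F_V\in\mathcal{L}$, standard theory for long-tailed distributions (cf.\ \cite{foss2011introduction}) supplies some $a_0\uparrow\infty$ with $a_0(x)=o(x)$ for which $F_V$ is $a_0$-insensitive, covering (1)--(3). For (4), a diagonal construction across a sequence $b_k\to\infty$ applied to $\ov{F}_Y(u)=o(\ov{F}_{VY}(b_k u))$ produces a function $a(x)\ge a_0(x)$ with $a(x)\uparrow\infty$, $a(x)=o(x)$, and $\ov{F}_Y(a(x))=o(\ov{F}_{VY}(x))$. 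Monotonicity then upgrades $a_0$-insensitivity to $a$-insensitivity on the range $a(x)=o(x)$, closing the four conditions.

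The main obstacle is the tension between conditions (3) and (4): enlarging $a$ improves the smallness of $\ov{F}_Y(a(x))$ but risks destroying insensitivity of $F_V$. What makes the diagonalization viable—and is not guaranteed for arbitrary $F_V\in\mathcal{S}$—is precisely the quantitative contraction built into the $\mathcal{A}$ class, which forces $\ov{F}_V((1-\epsilon)x)/\ov{F}_V(x)\to 1$ uniformly as $\epsilon\to 0$ and hence yields $a$-insensitivity for every $a(x)=o(x)$. Carrying out this balance carefully is the technical core of Tang's original argument.
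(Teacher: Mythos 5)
First, note that the paper does not prove this statement at all: it is imported verbatim as Theorem 2.1 of \cite{tang2006subexponentiality}, so there is no in-paper argument to compare yours against. Judging your proposal on its own merits: the first half is correct and clean. Writing $\ov{F}_{VY}(cx)=\int_0^\infty \ov{F}_V(cx/y)\,dF_Y(y)$, splitting at $y=x/K$, applying the $\mathcal{A}$-inequality pointwise on $\{y\le x/K\}$ and bounding the remainder by $\ov{F}_Y(x/K)=o(\ov{F}_{VY}(x))$ (hypothesis with $b=K$) does give $\limsup_{x\to\infty}\ov{F}_{VY}(cx)/\ov{F}_{VY}(x)\le\delta<1$.

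The second half, however, has a genuine gap. Your closing claim --- that the contraction $\limsup_x \ov{F}_V(cx)/\ov{F}_V(x)<1$ forces $\ov{F}_V((1-\epsilon)x)/\ov{F}_V(x)\to 1$ uniformly as $\epsilon\to 0$, and hence $a$-insensitivity of $F_V$ for \emph{every} $a(x)=o(x)$ --- is false. That uniform insensitivity property is essentially intermediate regular variation (membership in $\mathcal{I}$), which is strictly stronger than membership in $\mathcal{A}$: the two conditions control the ratio $\ov{F}_V(\lambda x)/\ov{F}_V(x)$ in opposite directions. A concrete counterexample is a Weibull-type tail $\ov{F}_V(x)=\exp(-x^{1/2})$, which lies in $\mathcal{A}$ (it is subexponential and $\ov{F}_V(cx)/\ov{F}_V(x)\to 0$ for $c>1$), yet for $a(x)=x/\log x=o(x)$ one has $\ov{F}_V(x-a(x))/\ov{F}_V(x)\sim\exp\bigl(x^{1/2}/(2\log x)\bigr)\to\infty$. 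This matters because your diagonalization for condition (4) of Theorem~\ref{closesubexp} inevitably produces $a(x)=x/b_k$ on blocks with $b_k\to\infty$, i.e.\ a function that is only "barely" $o(x)$; for such $a$ the insensitivity required by condition (3) is exactly what fails for general $F_V\in\mathcal{A}$, and the $a_0$ supplied by $F_V\in\mathcal{L}$ may grow far too slowly to make $\ov{F}_Y(a_0(x))=o(\ov{F}_{VY}(x))$ follow from the hypothesis. So the two requirements cannot in general be met by a single $a$, and the route through Theorem~\ref{closesubexp} does not close; Tang's actual proof proceeds instead by a direct estimate of the convolution tail $\ov{F_{VY}^{*2}}$ using the $\mathcal{A}$-contraction, rather than via the Cline--Samorodnitsky criterion.
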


As an immediate consequence, we obtain the following result for scaled phase-type distributions. 

\begin{corollary}
	Let $V$ be a non-negative random variable with distribution function $F_V$ and let $Y\sim \mbox{PH}(\bfpi, \bfT)$, independent of $V$. If $F_V \in \mathcal{A}$, then
	${F}_{VY} \in \mathcal{A}$.
\end{corollary}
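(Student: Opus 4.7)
The plan is to string together the two results immediately preceding the statement: Proposition~\ref{prop:long} and Theorem~\ref{the:closeA}. The corollary is essentially a one-line synthesis, so the work lies entirely in checking that the hypotheses of each of these results are in force.

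First I would verify that the assumption $F_V \in \mathcal{A}$ entails $F_V \in \mathcal{L}$. This follows from the standard chain of inclusions $\mathcal{A} \subset \mathcal{S} \subset \mathcal{L}$: the first inclusion is by the very definition of $\mathcal{A}$ (an $\mathcal{A}$ distribution is subexponential together with the extra ratio condition \eqref{eq:aclass}), and the second is a classical fact (subexponentiality implies long-tailedness; see the appendix reference in the paper or \cite{foss2011introduction}).

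With $F_V \in \mathcal{L}$ in hand and $Y \sim \mbox{PH}(\bfpi,\bfT)$ independent of $V$, Proposition~\ref{prop:long} immediately delivers the asymptotic relation
\[
\ov{F}_Y(x) = o\!\left(\ov{F}_{VY}(bx)\right), \qquad b>0.
\]
This is precisely the second hypothesis required by Theorem~\ref{the:closeA}. Since $F_V \in \mathcal{A}$ by assumption, both hypotheses of that theorem are satisfied, and applying it yields $F_{VY} \in \mathcal{A}$, which is the claim.

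There is no real obstacle: the argument is a clean concatenation of two previously established results, and the only step to be careful about is the inclusion $\mathcal{A} \subset \mathcal{L}$ needed to connect the assumption of the corollary to the hypothesis of Proposition~\ref{prop:long}. Once that is explicitly noted, the rest of the proof is a direct citation.
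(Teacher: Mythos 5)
Your proposal is correct and follows exactly the paper's own argument: the paper likewise proves the corollary by noting $\mathcal{A}\subset\mathcal{L}$, invoking Proposition~\ref{prop:long} to obtain $\ov{F}_Y(x)=o\left(\ov{F}_{VY}(bx)\right)$ for all $b>0$, and then applying Theorem~\ref{the:closeA}. Your explicit verification of the inclusion chain $\mathcal{A}\subset\mathcal{S}\subset\mathcal{L}$ is the only added detail, and it is the right point to be careful about.
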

\begin{proof}
	Follows directly from Theorem~\ref{the:closeA}, Proposition~\ref{prop:long} and using that $\mathcal{A} \subset \mathcal{L}$.
\end{proof}

A necessary and sufficient condition for subexponentiality of the product was more recently derived in \cite{xu2017necessary}. Denote by $D[F]$ the set of all positive discontinuities of $F$. 

\begin{theorem}[Theorem 1.2 in \cite{xu2017necessary}]
	Let $V$ and $Y$ be independent non-negative random variables with distribution functions $F_V$ and $F_Y$, respectively, and let $F_{VY}$ be the distribution of the product $VY$. Then $F_{VY} \in \mathcal{S}$ if and only if $F_{V} \in \mathcal{S}$ and either $D[F_{V}] = \O$ or $D[F_{V}] \neq \O$ and 
	\begin{align}\label{eq:condsub}
		\ov{F}_{Y}(x / d) - \ov{F}_{Y}((x+1) / d) = o(\ov{F}_{VY}(x))\,,
	\end{align}
	for all $d \in D[F_{V}]$.
\end{theorem}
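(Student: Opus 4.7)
The plan is to prove the two directions separately. For sufficiency, I assume $F_V\in\mathcal{S}$ together with the stated alternative on $D[F_V]$, and aim to show $F_{VY}\in\mathcal{S}$; for necessity, I assume $F_{VY}\in\mathcal{S}$ and derive both $F_V\in\mathcal{S}$ and, when $D[F_V]\neq\emptyset$, the asymptotic relation \eqref{eq:condsub}.

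For the sufficiency part when $D[F_V]=\emptyset$, continuity of $F_V$ together with $F_V\in\mathcal{S}$ delivers $h$-insensitivity of $F_V$ for a suitably slowly growing $h$, so I can invoke Theorem \ref{closesubexp} with a truncation function $a(x)$ for which conditions (1)--(3) there hold and condition (4) follows from the bound $\ov{F}_V(x^{1/2})\ov{F}_Y(x^{1/2})\leq \ov{F}_{VY}(x)$ already exploited in the proof of Corollary \ref{closesubexp_ph}. When $D[F_V]\neq\emptyset$ the plan is to decompose $F_V$ into a continuous part $F_V^c$ and an atomic part $F_V^d$, write $VY$ as the corresponding mixture, and note that each atom $d$ contributes $p_d\,\ov{F}_Y(x/d)$ with $p_d:=F_V(\{d\})$ to $\ov{F}_{VY}(x)$. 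Subexponentiality of $F_{VY}$ requires in particular $F_{VY}\in\mathcal{L}$, i.e.\ $\ov{F}_{VY}(x)-\ov{F}_{VY}(x+1)=o(\ov{F}_{VY}(x))$, and within the atomic-part contribution this discrete increment is exactly $p_d[\ov{F}_Y(x/d)-\ov{F}_Y((x+1)/d)]$: assumption \eqref{eq:condsub} is precisely what is needed to absorb it. Once the long-tailed property is in hand, standard Kesten-type bounds for convolutions of subexponential distributions deliver $F_{VY}\in\mathcal{S}$ via the usual $\ov{F}^{*2}/\ov{F}\to 2$ criterion applied to the mixture.

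For necessity, the idea is to reverse the above reasoning. The representation $\ov{F}_{VY}(x)=\int_0^\infty \ov{F}_Y(x/v)\,dF_V(v)$ makes each atom $d$ of $F_V$ visible as a $p_d\ov{F}_Y(x/d)$ summand in $\ov{F}_{VY}$, and the long-tailed property of $F_{VY}$ (implied by $F_{VY}\in\mathcal{S}$) then forces the asymptotic identity \eqref{eq:condsub}. Recovering $F_V\in\mathcal{S}$ itself from $F_{VY}\in\mathcal{S}$ is more delicate: the plan is to invoke a subexponential principle-of-a-single-big-jump argument, essentially saying that the event $\{VY>x\}$ for large $x$ is, up to asymptotic factors, dominated by a large value of $V$ with $Y$ contributing only a bounded multiplicative correction (which is where non-degeneracy of $Y$ enters), so that the convolution-ratio limit for $F_{VY}$ can be transferred back to $F_V$.

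The main obstacle I anticipate is exactly this last step, since no clean converse of Breiman's lemma is available across the full class $\mathcal{S}$. In the regularly varying subclass it is the content of the Jacobsen-type proposition recalled around \eqref{rvd_ph}, and that argument does not survive verbatim outside $\mathcal{R}$. The genuinely new ingredient needed is a replacement that works uniformly over subexponential scaling and, crucially, accommodates atoms of $V$ through condition \eqref{eq:condsub}, which is simultaneously the tool that enforces sufficiency (by restoring long-tailedness in the presence of discrete mass) and the signature that necessity produces (since long-tailedness of $F_{VY}$ leaves no other way to dissolve the atomic jumps).
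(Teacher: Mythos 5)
First, a point of comparison that matters: the paper does not prove this statement at all. It is quoted as Theorem 1.2 of \cite{xu2017necessary} and used as an imported black box, so there is no internal proof to measure your attempt against; any proof would have to reconstruct the argument of that reference from scratch.

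Judged on its own, your proposal has concrete gaps in each of its load-bearing steps. (i) In the sufficiency direction with $D[F_V]=\emptyset$, you verify condition (4) of Theorem \ref{closesubexp} via the bound $\ov{F}_V(x^{1/2})\,\ov{F}_Y(x^{1/2})\le \ov{F}_{VY}(x)$. That device works in Corollary \ref{closesubexp_ph} only because $Y$ there is phase-type, so that $\ov{F}_Y(a(x))$ decays exponentially by \eqref{PHtail_asymptotic} while $\ov{F}_V$ is heavier than any exponential; here $Y$ is an arbitrary non-negative variable and the ratio $\ov{F}_Y(a(x))/\bigl(\ov{F}_V(x^{1/2})\ov{F}_Y(x^{1/2})\bigr)$ need not vanish. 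The very purpose of \cite{xu2017necessary} is that the Cline--Samorodnitsky sufficient conditions are not sharp, so the theorem cannot be reduced to them. (ii) In the atomic case you claim that once long-tailedness of $F_{VY}$ is secured, ``standard Kesten-type bounds'' deliver $F_{VY}\in\mathcal{S}$. This is backwards: Kesten's inequality is a consequence of subexponentiality, not a route to it, and the inclusion $\mathcal{S}\subset\mathcal{L}$ is strict, so long-tailedness alone never yields $\ov{F}^{*2}/\ov{F}\to 2$. (iii) Most importantly, you concede that you have no argument for the necessity of $F_V\in\mathcal{S}$ given $F_{VY}\in\mathcal{S}$; that is the genuinely hard half of the theorem, and as restated here (with no hypothesis on $Y$ beyond non-negativity and non-degeneracy at zero) it even admits apparent counterexamples such as $V\equiv 1$ with $Y$ subexponential, which signals that the result relies on side conditions on the factor $Y$ in the original source that neither the paper's quotation nor your sketch records. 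The one step that is essentially sound is the necessity of \eqref{eq:condsub}: from $F_{VY}\in\mathcal{S}\subset\mathcal{L}$ and the lower bound $\ov{F}_{VY}(x)-\ov{F}_{VY}(x+1)\ge p_d\bigl[\ov{F}_Y(x/d)-\ov{F}_Y((x+1)/d)\bigr]$, where $p_d$ is the mass of $F_V$ at the atom $d$, the increment condition follows. But that is a small fragment of the equivalence, and the remainder of the plan would not close as written.
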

\begin{remark}\rm 
	In particular, the result above implies that when the scaling component in a scaled phase-type random variable is a continuous random variable with subexponential tail (thus, falling in the CPH class), the resulting distribution is subexponential as well. However, note that when dealing with discrete scaling, one should proceed more carefully since Condition \eqref{eq:condsub} is, in general, not easy to verify. Thus, the different criteria provided here are useful tools to determine the subexponentiality of the product, and the use of each of them depends on each particular case.   
\end{remark}

We now  consider other subclasses of $\mathcal{S}$. More specifically, we consider the classes of 
	  {\em Extended regularly varying} ($\mathcal{E}$), 
	 {\em Intermediate regularly varying} ($\mathcal{I}$),
	  {\em and Dominated varying} ($\mathcal{D}$) distributions. These classes satisfy  $\mathcal{R} \subset \mathcal{E} \subset \mathcal{I} \subset \mathcal{D}$ and  $ \mathcal{I} \subset  \left( \mathcal{D} \cap \mathcal{L} \right) \subset \mathcal{S} \subset \mathcal{L}$. 
	  We refer to the appendix for their definition.

The specific tail behavior of a PH distribution now yields the following results.
\begin{corollary}
	Let $V$ be a non-negative random variable with distribution function $F_V$ and let $Y\sim \mbox{PH}(\bfpi, \bfT)$ independent of $V$. Then we have that
	\begin{enumerate}
		\item If $F_V \in \mathcal{R}$ then ${F}_{VY} \in \mathcal{R}$ and their indices of regular variation are the same.
		\item If $F_V \in \mathcal{E}$ then ${F}_{VY} \in \mathcal{E}$.
		\item If $F_V \in \mathcal{I}$ then ${F}_{VY} \in \mathcal{I}$.
		\item If $F_V \in \mathcal{D} \cap \mathcal{L}$ then ${F}_{VY} \in \mathcal{D} \cap \mathcal{L}$.
		\item If $F_V \in \mathcal{L}$ then ${F}_{VY} \in  \mathcal{L}$.
	\end{enumerate}
\end{corollary}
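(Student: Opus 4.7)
The plan is to handle part (1) separately via Breiman's lemma, and then treat parts (2)--(5) uniformly via closure theorems from the literature on product convolutions, using Proposition~\ref{prop:long} as the crucial ingredient in each case.

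For part (1), since $Y\sim \mbox{PH}(\bfpi,\bfT)$ has a tail that is exponentially bounded by \eqref{PHtail_asymptotic}, all moments $\E[Y^\nu]$ are finite (as also recorded in Proposition~\ref{prop:31}(4)). In particular, for any $\alpha>0$ there is $\epsilon>0$ with $\E[Y^{\alpha+\epsilon}]<\infty$, so Breiman's lemma applies and gives $F_{VY}\in\mathcal{R}_{-\alpha}$ with the same index as $F_V$, together with the explicit constant $\E[Y^\alpha]$.

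For parts (2)--(5), the first observation is that each of the assumed classes is contained in $\mathcal{L}$: indeed $\mathcal{R}\subset \mathcal{E}\subset \mathcal{I}\subset \mathcal{S}\subset \mathcal{L}$, and in part (4) long-tailedness is assumed outright. Therefore $F_V\in\mathcal{L}$ in all four cases, and Proposition~\ref{prop:long} yields the key asymptotic
\begin{equation*}
\ov{F}_Y(x)=o\!\left(\ov{F}_{VY}(bx)\right),\qquad x\to\infty,
\end{equation*}
for every $b>0$. This is precisely the hypothesis one needs to feed into the general product-closure theorems: the argument is parallel to the $\mathcal{A}$-case corollary proved immediately above, but with $\mathcal{A}$ replaced by the target class in each item.

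With this reduction in hand, each of (2)--(5) follows by invoking the appropriate closure result from the product-convolution literature. For (2) and (3) one can use the $\mathcal{E}$- and $\mathcal{I}$-analogues of Theorem~\ref{the:closeA} from \cite{tang2006subexponentiality} and \cite{cline1994subexponentiality}; for (4) the $\mathcal{D}\cap\mathcal{L}$-closure is covered by the same line of argument (dominated variation is preserved because the condition $\limsup_{x\to\infty}\ov{F}_{VY}(x/2)/\ov{F}_{VY}(x)<\infty$ can be extracted from the corresponding property of $F_V$ together with the $o$-estimate on $\ov{F}_Y$); and for (5) the long-tail property $\ov{F}_{VY}(x-t)\sim \ov{F}_{VY}(x)$ for fixed $t$ follows directly from the $o$-estimate combined with long-tailedness of $F_V$, again as in \cite{cline1994subexponentiality}. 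I do not anticipate any real obstacle, since the only probabilistic input specific to the PH structure is the exponential upper bound on $\ov{F}_Y$, which is already packaged into Proposition~\ref{prop:long}; the remaining work is purely a matter of quoting the correct closure lemma for each class.
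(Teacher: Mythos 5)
Your proposal is correct and follows essentially the same route as the paper: reduce all cases to the observation that $F_V\in\mathcal{L}$, invoke Proposition~\ref{prop:long} to obtain $\ov{F}_Y(x)=o(\ov{F}_{VY}(bx))$, and then quote the corresponding closure theorems of \cite{cline1994subexponentiality} for each class (the paper cites Corollary 3.6(ii), Theorems 3.5(iii), 3.4(ii), 3.3(ii) and 2.2(iii) there, and likewise notes that part (1) also follows directly from Breiman's lemma).
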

\begin{proof}
	Note  that for all cases $F_{V} \in \mathcal{L}$, then Proposition~\ref{prop:long} implies that  $\ov{F}_Y(x)=o\left( \ov{F}_{VY}(bx)\right)$ for all $b>0$. Thus, 
	\begin{enumerate}
		\item Follows from \cite[Corollary 3.6 (ii)]{cline1994subexponentiality} (and is in fact also a direct consequence of Breiman's lemma).
		\item Follows from \cite[Theorem 3.5 (iii)]{cline1994subexponentiality}.
		\item Follows from \cite[Theorem 3.4 (ii)]{cline1994subexponentiality}.
		\item Follows from \cite[Theorem 3.3 (ii) and Theorem 2.2 (iii)]{cline1994subexponentiality}.
		\item Follows from \cite[Theorem 2.2 (iii)]{cline1994subexponentiality}.
	\end{enumerate}
\end{proof}
}

\subsection{An EM algorithm for CPH distributions}\label{sec:em}
We present an EM algorithm for estimating the CPH class of distributions when $\Theta$ is any positive, continuous random variable.
Assume that $\Theta$ belongs to a parametric family depending on the vector $\vect{\alpha}$ and denote by $f_\Theta(\,\cdot \,; \vect{\alpha})$ its corresponding density function. 

As is usual, we will exploit the path representation of PH distributions seen as absorption times of a finite-state Markov jump process. Thus, the complete data {\color{black}are the entire paths of the Markov jump processes} and the scaling component $\Theta$. In this setting, not only do we not observe the state sojourn times and transitions, but the realizations of $\Theta$ are also not observed.

Consider $x_1, \dots, x_M$ an iid sample from a CPH distributed random variable, and let $L_c(\bfpi, \bfT,\vect{\alpha} ;\bfx)$ denote the corresponding complete {\color{black}data} likelihood function. In order to write out the latter explicitly, we need to make the following definitions. Let $B_k$ be the number of times the underlying Markov jump {\color{black} processes start}
in state $k$, $N_{kl}$ the total number of transitions from state $k$ to $l$ until absorption, $N_k$ the number of
times that $k$ was the last state to be visited before absorption, and finally let $Z_k$ be  the  cumulated time that
the Markov jump {\color{black}processes} spent in state $k$.

Then, using $f_{X,Y}(x,y) = f_{X|Y}(x|y)f_{Y}(y)$ and conditioning on the {\color{black}paths of the Markov jump processes}
\begin{align*}
&L_c(\bfpi, \bfT, \vect{\alpha} ; \bfx ) \\ 
&\quad = f_\Theta(\theta; \vect{\alpha}) \prod_{k=1}^{p}\pi_{k}^{B_{k}}\prod_{k=1}^{p}\prod_{l=1, l\neq k}^{p} \left(\theta t_{kl}\right)^{N_{kl}}\exp\big( -\theta t_{kl}Z_{k} \big) \\
&\quad\quad\times\prod_{k=1}^{p}\left(\theta t_{k} \right)^{N_{k}}\exp\big(-\theta t_{k} Z_{k} \big)  \,.
\end{align*}
Consequently, the corresponding log-likelihood (disregarding the terms which do not depend on any parameters) is given by 
\begin{align*}
l_c(\bfpi, \bfT,\vect{\alpha} ; \bfx) 
&= \sum_{k=1}^{p} {B_{k}} \log \left( \pi_{k} \right) + \sum_{k=1}^{p}\sum_{l=1, l\neq k}^{p} {N_{kl}} \log \left(t_{kl}\right)
-\sum_{k=1}^{p}\sum_{l=1, l\neq k}^{p}{t_{kl} \theta Z_{k} } \\
& +  \sum_{k=1}^{p} {N_{k}}\log \left(t_{k}\right)
-\sum_{k=1}^{p} {t_{k} \theta Z_{k} } + \log(f_\Theta(\theta; \vect{\alpha}) )  \,.
\end{align*}

With this decomposition of the full likelihood at hand, we now outline the necessary computations that are needed in each of the two steps of the EM algorithm, and then collect the main formulas at the end.\\

\textbf{E-Step}

This step consists of computing the conditional expectation of the {\color{black}complete data} log-likelihood given the observed data, and given some fixed parameters. 
We consider one (generic) data point ($M = 1$) and let $x = x_1$. Then
\begin{align*}
	\E \left[ \log(f_\Theta(\Theta; \vect{\alpha}) ) \mid X=x \right] & = \int_0^\infty \log(f_\Theta(\theta; \vect{\alpha}) ) f_{\Theta | X} (\theta | x) d\theta \\
	& =   \int_0^\infty \log(f_\Theta(\theta; \vect{\alpha}) ) \frac{f_{\Theta , X} (\theta , x)}{f_{X}(x)} d\theta \\
	& =   \int_0^\infty \log(f_\Theta(\theta; \vect{\alpha}) ) \frac{f_{ X | \Theta } ( x | \theta ) f_\Theta(\theta)}{f_{X}(x)} d\theta \\
	& = \frac{\int_0^\infty \log(f_\Theta(\theta; \vect{\alpha}) ) \, \bfpi \,\exp({ \theta \bfT x}) \theta\, \bft \, f_\Theta(\theta) d\theta}{f_{X}(x)} \,.
\end{align*}
Regarding $B_{k} $, 
\begin{align*}
	\E \left[ B_{k} \mid X =x \right] & =  \P \left(  J_{0}=k \mid X =x\right)  \\
	& = \int_0^\infty \P \left(J_{0}=k, \Theta \in d \theta  \mid X =x\right)  d\theta \\
	& = \int_0^\infty \frac{\P \left(J_{0}=k, \Theta \in d \theta , X \in dx\right)}{\P \left(X \in dx\right)}   d\theta \\
	& =  \frac{ \int_0^\infty \P \left( X \in dx \mid J_{0}=k, \Theta =\theta \right) \P \left(J_{0}=k \mid  \Theta= \theta \right) f_\Theta(\theta) d\theta}{\P \left(X \in dx\right)}  \\
	& =  \frac{ \int_0^\infty \pi_k \, \bfe^{\top}_{k} \exp({\theta \bfT x }) \theta\, \bft \, f_\Theta(\theta) d\theta}{f_{X}(x)}   \,,
\end{align*}
where $\bfe_k$ denotes a $p$-dimensional column vector with all entries equal
to zero except the $k$-th entry, which equals one.

For the term which involves the product of $\Theta$ and $Z_k$ we will make use of the tower property of conditional expectations, namely  that
\begin{align*}
	\E \left[ \Theta Z_{k} \mid X =x \right] = \E \left( \Theta \E \left(  Z_{k} \mid \Theta, X = x \right) \mid X =x\right)\,.
\end{align*}
Since
\begin{align*}
	\E \left[  Z_{k} \mid \Theta =\theta, X=x \right] & =  \E \left.\left[ \int_0^\infty 1_{\lbrace J_{u}=k \rbrace}du \right| \Theta = \theta, X = x\right] \\
	&  = \int_0^\infty  \P  \left( J_{u}=k  \mid \Theta = \theta, X =x\right) du \\
	& = \frac{\int_0^\infty  \P \left( J_{u}=k  , \Theta \in d \theta, X \in dx\right) du}{  \P \left(\Theta \in d \theta, X \in dx\right) }\,,
\end{align*}
then 
\begin{align*}
	\E &\left[ \Theta Z_{k} \mid X=x \right]\\
	&  = \int_0^\infty \theta  \frac{\int  \P \left( J_{u}=k  , \Theta \in d \theta, X \in dx\right) du}{  \P \left( \Theta \in d \theta, X \in dx\right) } \P \left( \Theta \in d \theta \mid  X =x\right) d\theta \\
	& = \int_0^\infty \theta  \frac{\int  \P \left( J_{u}=k  , \Theta \in d \theta, X \in dx\right) du}{  \P \left( X \in dx\right) } d\theta \\
	& = \frac{\int_0^\infty \theta \int_0^\infty   \P \left(  X \in dx \mid J_{u}=k  , \Theta=\theta\right) \P \left( J_{u}=k  \mid  \Theta =\theta\right) f_\Theta(\theta)  du d\theta}{\P \left( X \in dx\right)} \\
	& = \frac{\int_0^\infty \theta \int^{x}_{0}  \bfe^{\top}_{k} \exp({\theta \bfT(x-u)}) \theta \bft \bfpi \exp({\theta \bfT u })\bfe_{k}   du  f_\Theta(\theta) d\theta}{f_{X}(x)}\,.
\end{align*}
Similarly, one finds that
\begin{align*}
	\E &\left[ N_{kl} \mid X =x \right] = \frac{\int \theta t_{kl} \int^{x}_{0}  \bfe^{\top}_{l} \exp({\theta \bfT(x-u)})\theta \bft \bfpi \exp({\theta \bfT u })\bfe_{k}   du  f_\Theta(\theta) d\theta}{f_{X}(x)}\,,
\end{align*}
and 
\begin{align*}
	\E \left[ N_{k} \mid X =x \right]
	& = \frac{\int_0^\infty \theta t_{k}  \bfpi \exp({\theta \bfT x })\bfe_{k}    f_\Theta(\theta) d\theta}{f_{X}(x)}\,.
\end{align*}
For $M>1$, we simply sum over $x_i$, $i=1,\dots,M$, in the formulas above. 

\textbf{M-Step}

Having found the required expectations, the maximization of the conditional expected log-likelihood in terms of the parameters $\vect{\alpha}$, $\bfpi$ and $\bfT$ is done separately. Some of the expressions will in general not have an explicit solution.

In full generality, for the parameter $\vect{\alpha}$ of the scaling distribution we write
\begin{align*}
	\hat{\boldsymbol{\alpha}}= \argmax_{\boldsymbol{\alpha}} \E \left[ \log(f_\Theta(\Theta; \vect{\alpha}) ) \mid \bfX = \bfx \right] \,.
\end{align*}
For the parameters of the PH component, we first address the estimation of $\bfpi$. Consider the Lagrange function
\begin{align*}
	\zeta(\bfpi)=\sum_{k=1}^{p} {B_{k}} \log \left( \pi_{k} \right)+ \mu \left(1-\sum_{k=1}^{p} \pi_{k} \right),
\end{align*}
where $\mu$ is a Lagrange multiplier. Then
\begin{align*}
	\frac{\partial \zeta(\bfpi)}{\partial \pi_{k}}= \frac{{B_{k}}}{\pi_{k}} - \mu =0 \iff \mu \pi_{k}= B_{k}\,.
\end{align*}
Summing over $k$
\begin{align*}
	\mu=  \sum_{k=1}^{p}B_{k}=M\,,
\end{align*}
thus
\begin{align*}
 \hat{\pi}_{k}= \frac{{B_{k}}}{M}\,.
\end{align*}

Now, we consider the non-diagonal elements of $\bfT$, $t_{kl}$, $k \neq l$. We have that
\begin{align*}
	\frac{\partial l_{c}}{\partial t_{kl}}= N_{kl} \frac{1}{t_{kl}}- \theta Z_{k}=0\,,
\end{align*}
implying 
\begin{align*}
	\hat{t}_{kl}=\frac{N_{kl} }{\theta Z_{k}}\,.
\end{align*}
In a similar way, we can show that for $t_{k}$
\begin{align*}
	\hat{t}_{k}=\frac{N_{k} }{\theta Z_{k}}\,.
\end{align*}
Finally, we obtain the estimators for the diagonal elements of $\bfT$, by
\begin{align*}
	\hat{t}_{kk}=-\left( \hat{t}_{k} + \sum_{l\neq k} \hat{t}_{kl}\right).
\end{align*}
We summarize the two steps into a completed EM algorithm.

\begin{algorithm}[EM algorithm for CPH distributions]\label{alg:EMCPH}
\

	0. Initialize with some ``arbitrary'' $( \bfpi,\bfT, \vect{\alpha})$.
	
	1. (E-step) Calculate

\begin{align*}
	& \E \left( B_{k} \mid \bfX = \bfx \right) 
	=\sum_{n=1}^{M} \int_0^\infty \frac{  \pi_k \, \bfe^{\top}_{k} \exp({\theta \bfT x_n }) \theta \bft }{f_{X}(x_n)} \, f_\Theta(\theta) d\theta,   \\[3mm]
	&\E \left( \Theta Z_{k} \mid \bfX = \bfx \right)
	 = \sum_{n=1}^{M} \int_0^\infty  \theta \frac{ \int^{x_n}_{0}  \bfe^{\top}_{k} \exp({\theta \bfT(x_n-u)})\theta \bft \bfpi \exp({\theta \bfT u })\bfe_{k}   du }{f_{X}(x_n)}  f_\Theta(\theta) d\theta,  \\[3mm]
	& \E \left( N_{kl} \mid \bfX = \bfx \right)
	 = \sum_{n=1}^{M} \int_0^\infty \theta t_{kl} \frac{ \int^{x_n}_{0}  \bfe^{\top}_{l} \exp({\theta \bfT(x_n-u)})\theta \bft \bfpi \exp({\theta \bfT u })\bfe_{k}   du }{f_{X}(x_n)}  f_\Theta(\theta) d\theta, \\[3mm]
	& \E \left( N_{k} \mid \bfX = \bfx \right)
	 = \sum_{n=1}^{M} \int_0^\infty \theta t_{k}   \frac{\bfpi \exp({\theta \bfT x_n })\bfe_{k}    }{f_{X}(x_n)} f_\Theta(\theta) d\theta.
\end{align*}

	2. (M-step) Let
	\begin{align*}
	\hat{\vect{\alpha}} &= \argmax_{\vect{\alpha}} \E \left( \log(f_\Theta(\Theta; \vect{\alpha}) ) \mid \bfX = \bfx \right) \\
	 &= \argmax_{\vect{\alpha}} \sum_{n=1}^{M} \int_0^\infty \log(f_\Theta(\theta; \vect{\alpha}) ) \,  \frac{ \bfpi \exp({\theta \bfT x_n}) \theta \bft \, }{f_{X}(x_n)}f_\Theta(\theta) d\theta,
	\end{align*}
	\begin{align*}
		&\hat{\pi}_{k} = \frac{\E\left( B_{k} \mid \bfX = \bfx \right)}{M}  
		\,, \quad
		\hat{t}_{kl} = \frac {\mathlarger{ \E\left( N_{kl} \mid \bfX = \bfx \right) }}{\mathlarger{ \E \left(\Theta Z_{k} \mid \bfX = \bfx \right) }}
		\,, \quad
		\hat{t}_{k} = \frac {\mathlarger{ \E\left( N_{k} \mid \bfX = \bfx \right) }} {\mathlarger{ \E\left( \Theta Z_{k} \mid \bfX = \bfx \right)}}
		\,, \\
		&\hat{t}_{kk} = -\sum_{l \neq k} \hat{t}_{kl} -\hat{t}_{k} \,.
	\end{align*}
	Let $\hat{\bfpi} = ( \hat{\pi}_{1}, \ldots , \hat{\pi}_{p} )$, $\hat{\bfT} = \{ \hat{t}_{kl} \}_{ k, l = 1, \ldots, p}$, and $\hat{\bft} = ( \hat{t}_{1}, \ldots, \hat{t}_{p} )^{ \top }$.
	
	3. Assign $\vect{\alpha} = \hat{\vect{\alpha}} $, $\bfpi:=\hat{\bfpi}$, $\bfT :=\hat{\bfT}$, $ \bft :=\hat{\bft}$ and GOTO 1.
\end{algorithm}
\begin{remark}\rm
	In general, none of the integrals will have explicit solutions, and approximations will have to be employed. Namely, one may discretize the continuous distribution and approximate the integrals by numerical methods, such as Simpson's rule, or use diagonalization for the integration of matrix functions. In practice, a combination of the two approaches yields good results.
\end{remark}

\begin{remark}\rm
	The discrete case follows in a similar way, where sums replace integrals and are much easier to handle. Such an EM algorithm for NPH distributions was introduced in \cite{bladt2017fitting}, and the above formulas can be seen as the limit when the discretization becomes infinitely fine. In \cite{bladt2017fitting}, in fact, several of the illustrations arise as discretized continuous random variables. 
\end{remark}

\subsection{Censored data}\label{sec:censored}
We call a data point \textit{right-censored} at $v$ if it takes an unknown value above $v$, and \textit{left-censored} at $w$ if it takes an unknown value below $w$. It is called  \textit{interval-censored} if it takes an unknown value within the interval $(v , w]$. Left-censoring is a particular case of interval-censoring with $v=0$, while right-censoring is obtained by fixing $v$ and letting $w \to \infty$.

The EM algorithm for censored data works in much the same way as for uncensored data, and the only change in Algorithm~\ref{alg:EMCPH} is in the E-step. The derivation follows the approach taken for the PH case, see \cite{olsson1996estimation}. We consider first
\begin{align*}
	\E \left[ \log(f_\Theta(\Theta; \vect{\alpha}) ) \1(X >v)  \right] 
	& = \int_{v}^{\infty} \E \left[ \log(f_\Theta(\Theta; \vect{\alpha}) ) \mid X = x  \right]  f_{X}(x) dx  \\
	& =  \int_{v}^{\infty}  \int_0^\infty \log(f_\Theta(\theta; \vect{\alpha}) ) f_{\Theta | X} (\theta | x) d\theta f_{X}(x) dx  \\ 
	& = \int_{v}^{\infty}  \int_0^\infty \log(f_\Theta(\theta; \vect{\alpha}) )  \bfpi \exp({ \theta \bfT x}) \theta \bft  f_\Theta(\theta) d\theta dx \\
	& =  \int_0^\infty \log(f_\Theta(\theta; \vect{\alpha}) )  \bfpi \exp({ \theta \bfT v}) \bfe  f_\Theta(\theta) d\theta  \,.
\end{align*}
Then, for interval-censored data, we obtain
\begin{align*}
	\E \left[ \log(f_\Theta(\Theta; \vect{\alpha}) ) \mid  X \in (v,w]  \right]  
	& = \frac{\E \left[ \log(f_\Theta(\Theta; \vect{\alpha}) ) \1(X >v)  \right]  - \E \left[ \log(f_\Theta(\Theta; \vect{\alpha}) ) \1(X >w)  \right] }{\P (X \in (v, w] )}\\
	& = \frac{\int_0^\infty  \log(f_\Theta(\theta; \vect{\alpha}) )  (\bfpi \exp({ \theta \bfT v}) \bfe  - \bfpi \exp({ \theta \bfT w}) \bfe  ) f_\Theta(\theta) d\theta}{\P (X \in (v, w] ) } \,.
\end{align*}

In the following we present the resulting remaining formulas (their derivations are similar to the above and are thus omitted for brevity). 
\begin{align*}
	& \E \left( B_{k} \mid  X \in (v,w] \right) 
	= \int_0^\infty \frac{  \pi_k \, \bfe^{\top}_{k} \exp({\theta \bfT v }) \bfe  - \pi_k \, \bfe^{\top}_{k} \exp({\theta \bfT w }) \bfe }{\P (X \in (v, w] )} \, f_\Theta(\theta) d\theta,   \\[3mm]
	&\E \left( \Theta Z_{k} \mid  X \in (v,w]  \right) \\
	& \quad = \int_0^\infty  \frac{\theta}{\P (X \in (v, w] )} \Bigg[  \int_{v}^{w} \bfpi \exp({\theta\bfT u}) \bfe_{k}du -  \int_{0}^{w} \bfe_{k}^{\top} \exp({\theta \bfT(w-u)}) \bfe \bfpi \exp({\theta\bfT u}) \bfe_{k}du \\
	& \quad \qquad  \qquad \qquad \qquad \qquad + \int_{0}^{v} \bfe_{k}^{\top} \exp({\theta\bfT(v-u)}) \bfe \bfpi \exp({\theta\bfT u}) \bfe_{k}du \Bigg] f_\Theta(\theta) d\theta,  \\[3mm]
	 & \E \left( N_{kl} \mid  X \in (v,w]  \right) \\ 
	 &  \quad =\int_0^\infty \frac{ \theta t_{kl}}{\P (X \in (v, w] )} \Bigg[ \int_{v}^{w}  \bfpi \exp({\theta\bfT u}) \bfe_{k}du - \int_{0}^{w} \bfe_{l}^{\top} \exp({\theta\bfT(w- u)}) \bfe \bfpi \exp({\theta\bfT u}) \bfe_{k} du \\
	 & \quad \qquad  \qquad \qquad \qquad \qquad  + \int_{0}^{v} \bfe_{l}^{\top} \exp({\theta\bfT(v- u)}) \bfe \bfpi \exp({\theta\bfT u}) \bfe_{k} du \Bigg] f_\Theta(\theta) d\theta, \\[3mm]
	& \E \left( N_{k} \mid  X \in (v,w]  \right)
	 =  \int_0^\infty \theta t_{k}   \frac{ \int_{v}^{w}  \bfpi \exp({\theta \bfT u })\bfe_{k} du    }{\P (X \in (v, w] )} f_\Theta(\theta) d\theta \,.
\end{align*}

\begin{remark}\normalfont
The main challenge when implementing the above formulas is obtaining numerical estimates for integrals defined with respect to matrix exponentials, which are usually slow to evaluate. A suite of fast and useful routines for calculating matrix exponentials (and more generally, estimation tools for PH distributions) can be found, for instance, in the R package \texttt{matrixdist}, see \cite{bladt2021matrixdist,matrixdist}.

\end{remark}

\section{Examples}\label{sec:examples}

In this section we present three detailed numerical illustrations of the estimation of CPH distributions via the EM algorithm from the previous section.
In the first two examples we fit CPH distributions to real data sets, the second one containing censored observations, while in the last example we consider the estimation of a CPH distribution to a theoretical given distribution.
In all cases, we ran the algorithms until the changes in the successive log-likelihoods became negligible. Note that the purpose of this section is not to thoroughly compare to and outperform other models for the given data (which with criteria like AIC or BIC would in any case not be evident given the non-identifiability {\color{black}and overparametrization issues} of PH distributions in general). Instead, our aim is to illustrate how the algorithms developed in this paper can, in fact, be implemented in a straightforward manner. As a consequence, we showcase CPH distributions as interesting  alternatives in the statistical modeling toolkit of the respective application areas. Note that the concrete purpose may then decide which model one might want to use.

 \subsection{Dutch fire insurance data} 
We consider claims above 1 million (Euro) from the Dutch fire insurance claim data set studied in \cite{albrecher2017reinsurance}, subtract 1 million to all data points (to shift them to the origin), and scale by a factor of $10^{-6}$. Subsequently, we fit a matrix-Pareto type II to the resulting sample. To reduce the number of parameters, we consider a Coxian structure (which is often sufficient) of dimension $3$ in the PH component, obtaining in this way the following estimated parameters
\begin{gather*} 
	\hat{\bfpi}=\left(
	1, \,0,\, 0\right)\,, \\ 
	\hat{\bfT}=\left( \begin{array}{ccc}
	-0.8620  & 0.8079  & 0  \\
	0 &  -2.4341 & 1.1014  \\
	0 & 0 & -1.5808 \\
	\end{array} \right) \,, \\ 
	\hat{\alpha}= 1.3792 \,,
\end{gather*}
and corresponding log-likehood of $-2,657.666$. 
For reference, we also consider a matrix-Pareto (type I) model \citep{albrecher2019matrix} defined by
\begin{align*}
	\ov{F}(x) = \bfpi \left( \frac{x}{\beta} + 1\right)^{\bfT} \bfe \,, \quad x>0 \,.
\end{align*}
We again use a Coxian structure of dimension 3 in the PH component. The resulting estimated parameters are
\begin{gather*} 
	\hat{\bfpi}=\left(
	1, \,0,\, 0\right)\,, \\ 
	\hat{\bfT}=\left( \begin{array}{ccc}
	-1.4370 & 1.3183   & 0  \\
	0 &  -9.0824 & 2.3177  \\
	0 & 0 & -5.8812 \\
	\end{array} \right) \,, \\ 
	\hat{\beta} = 1.5821 \,,
\end{gather*}
with corresponding log-likelihood $-2,657.702$. 

Figures ~\ref{fig:dutch} and \ref{fig:dutchqq} show that both fitted distributions provide adequate and very similar models for the sample. Regarding the tail behavior,
 we have that the index of regular variation of the matrix-Pareto type II model is given by $\hat{\alpha} = 1.3792$, while for the matrix-Pareto (type I) model, it is given by the negative of the largest real eigenvalue, viz. $\lambda = 1.4370$. 
 Note that both these two estimates are comparable with results obtained in previous studies. For instance,
in \cite[Page 107]{albrecher2017reinsurance}, a value of $1.255$ is proposed in their splicing model. 
In contrast to traditional extreme value techniques, the present matrix models are global. Hence, the tail index is only one aspect of the distribution, not the calibration focus, yet the fit is very satisfactory. As previously noted, when comparing matrix-Pareto type I and type II distributions, the latter has a tail parameter that does not depend on the underlying Markov structure. The advantage, in this case, is that it may also estimate the tail index in a first step separately with extreme value techniques and then estimate the other parameters in a subsequent step (similar to the approach in \cite{bladt2017fitting} for the NPH case). On the other hand, the matrix-Pareto type II is computationally more demanding. The running times and resulting estimators depend on the quality of the approximations in the EM algorithm formulas. For the Dutch fire insurance data, in both cases, increasing the dimension of the PH component does not improve the fit significantly. For instance, for a Coxian structure of dimension $5$ in the PH component, we obtain a log-likelihood of $-2,657.603$ in the matrix-Pareto type I model and $-2,657.653$ in the type II. Thus, the choice of dimension 3 seems eventually very reasonable here. 

\begin{figure}[h]
\centering
\includegraphics[width=0.49\textwidth]{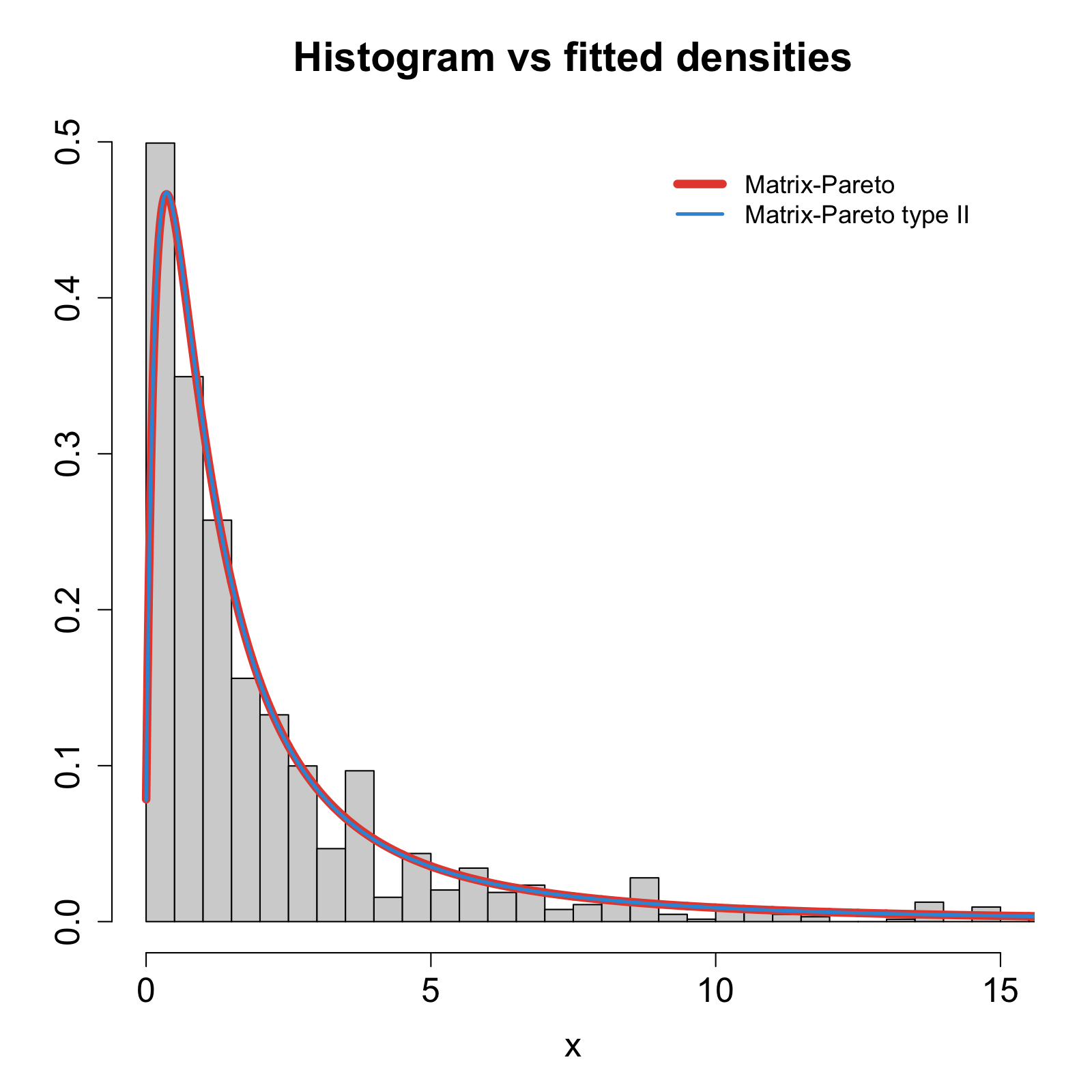}
\includegraphics[width=0.49\textwidth]{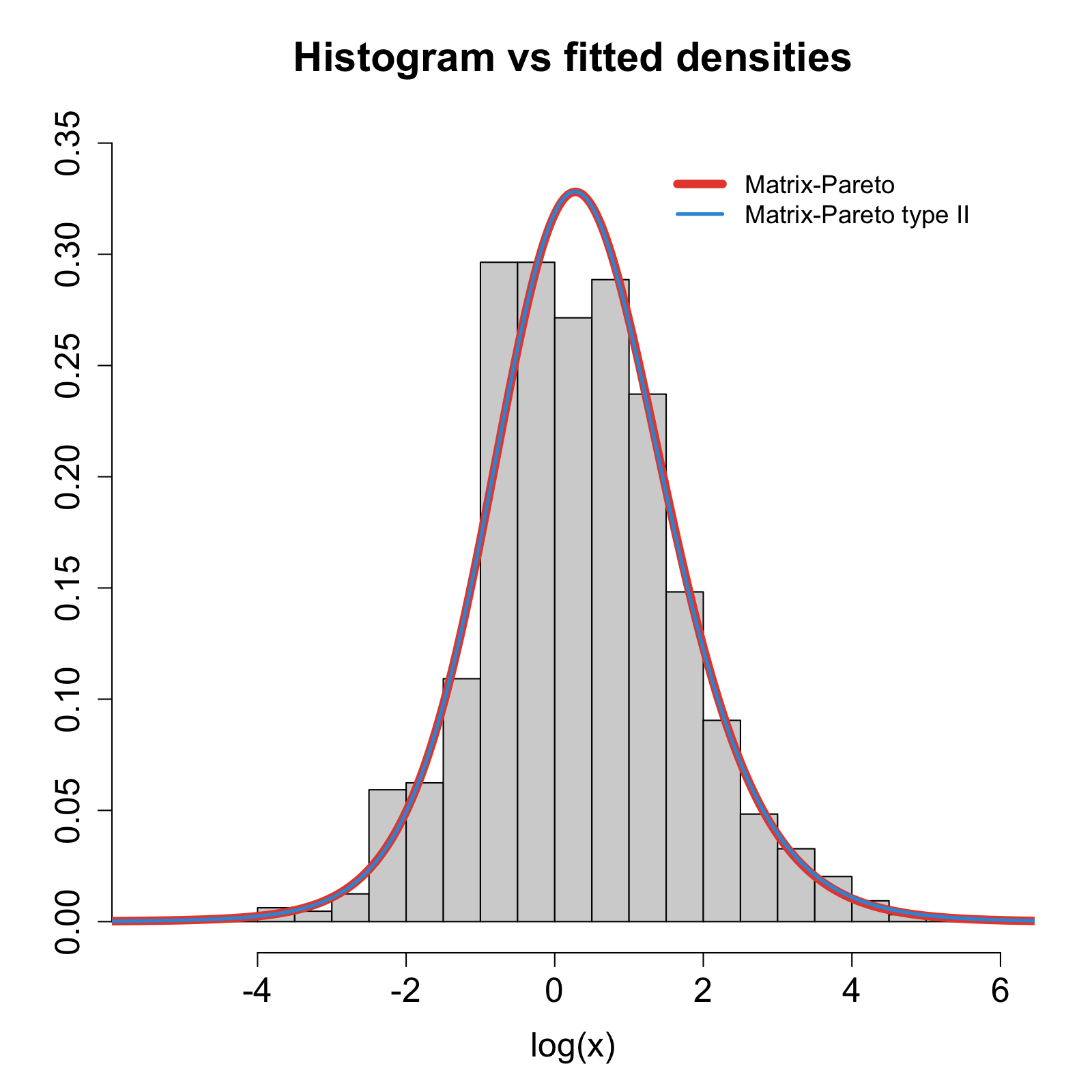}
\caption{Histogram of Dutch fire insurance data versus fitted matrix-Pareto type I and II distributions (left), and histogram of the log-data (right)}
\label{fig:dutch}
\end{figure}
\begin{figure}[h]
\centering
\includegraphics[width=0.49\textwidth]{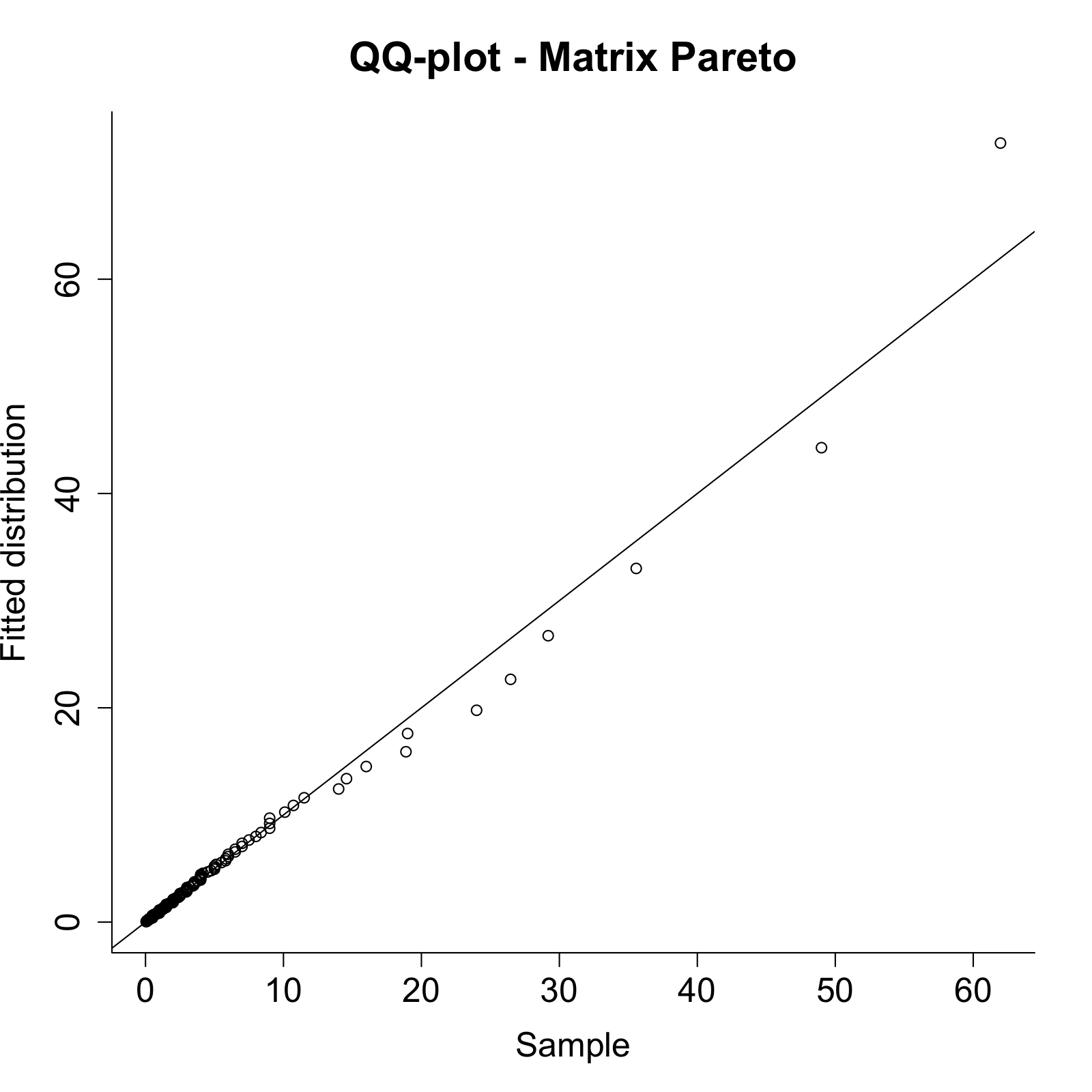}
\includegraphics[width=0.49\textwidth]{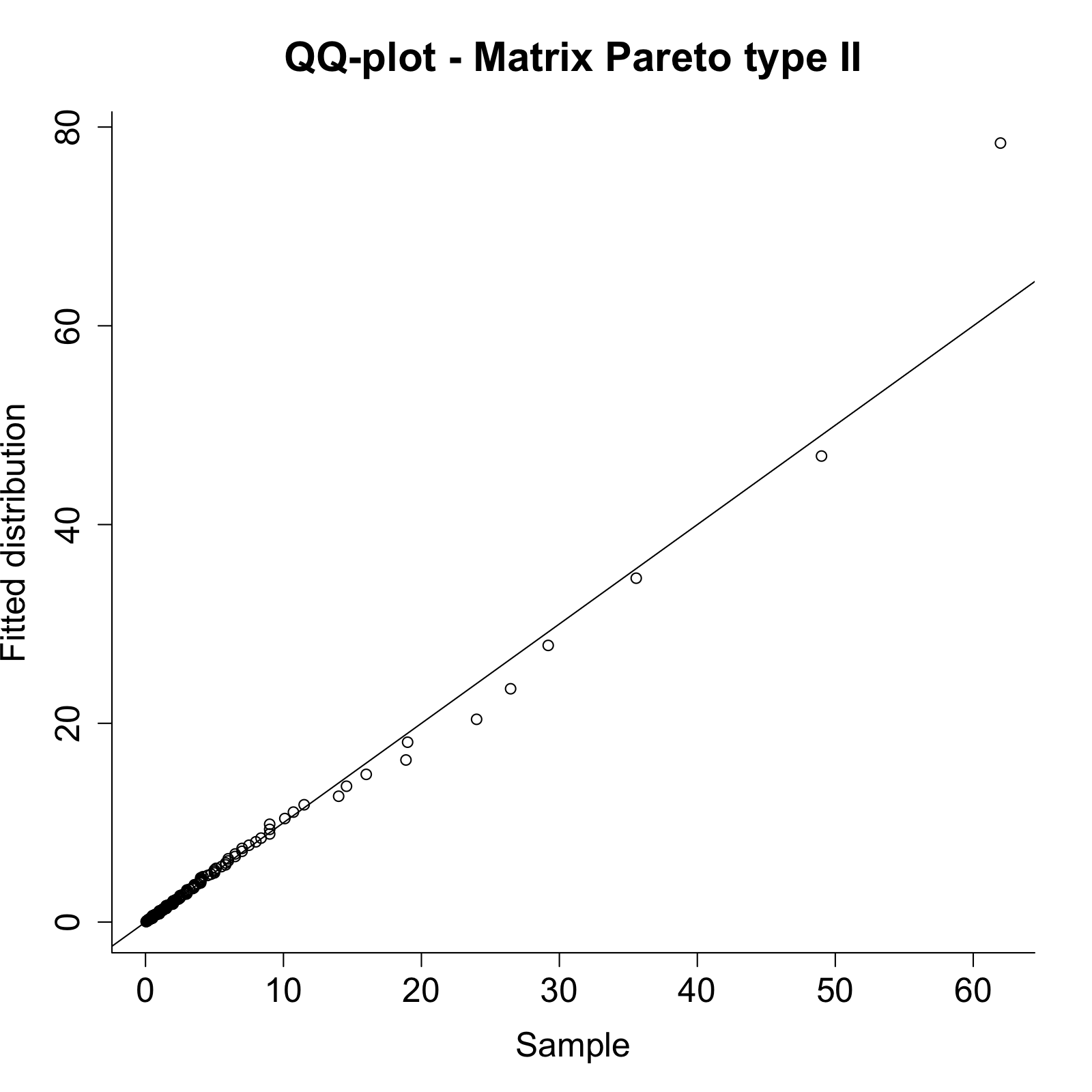}
\caption{QQ-plot of Dutch fire insurance data versus fitted matrix-Pareto type I distribution (left), and QQ-plot of Dutch fire insurance data versus fitted matrix-Pareto type II distribution  (right)}
\label{fig:dutchqq}
\end{figure}
Note that in \cite{albrecher2019inhomogeneous}, a previous analysis of the same data set employing the matrix-Pareto (type I) distribution was considered with a specific and sparse matrix structure of dimension 20, whereas here, we are interested in the best fit of a low-dimensional structure.

\subsection{Loss insurance data} 
We consider the loss insurance claim data set from \cite{frees1998understanding}.
The data comprises 1500 insurance claims from a real-life insurance company, where each data point consists of an indemnity payment (loss) and an allocated loss adjustment expense (ALAE). 
For the present analysis, we only use the loss component (scaled by a factor of $10^{-4}$), for which 34 observations are right-censored, and fit a matrix-Pareto type II of $4$ phases.  The resulting fitted parameters are 
\begin{gather*} 
	\hat{\bfpi}=\left(
	0.0476, \,0.0289,\, 0.1412, \, 0.7823\right)\,, \\ 
	\hat{\bfT}=\left( \begin{array}{cccc}
	-2.9587 & 0.1886 & 1.2395 & 0.6833  \\
	0.5585 & -3.5859 & 0.6233 & 0.0364  \\
	0.1152 & 0.0650 & -0.5554 & 0.2892 \\
	0.5079 & 1.9315 & 0.4666 & -3.0784
	\end{array} \right) \,, \\ 
	\hat{\alpha}= 1.3744 \,.
\end{gather*}
The quality of the fit is supported by Figure~\ref{fig:loss}, where we see that the cumulative hazard of the matrix-Pareto type II model is close to the one implied by the non-parametric Kaplan-Meier estimator.  A recent analysis of the same data set employing the matrix-Pareto (type I) distribution can be found in \cite{bladt2021matrixdist}.
\begin{figure}[h]
\centering
\includegraphics[width=0.49\textwidth]{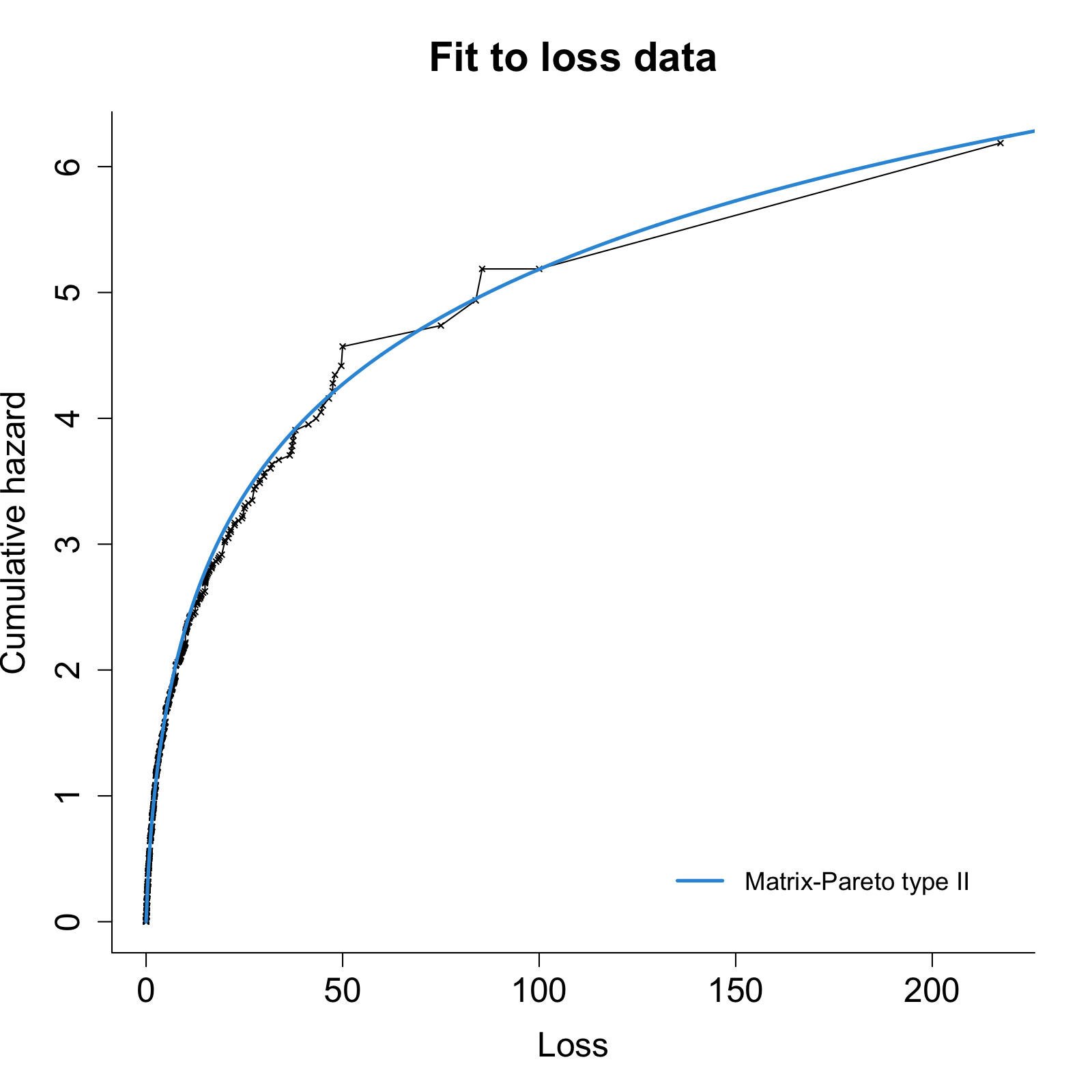}
\caption{Cumulative hazard function of the fitted matrix-Pareto type II versus the non-parametric Nelson-Aalen estimator of the sample. }
\label{fig:loss}
\end{figure}

\subsection{Fitting to a known distribution}
Algorithm~\ref{alg:EMCPH} can be modified to fit a CPH distribution to a theoretical given distribution. This is done in a similar way as in the PH case, and the idea consists of considering sequences of empirical distributions with increasing sample size. Let $h$ be an absolutely continuous density. Then, Algorithm~\ref{alg:EMCPH} and dominated convergence yield: 
\begin{align*}
	& \hat{\vect{\alpha}} \to  \argmax_{\vect{\alpha}} \int_0^\infty \int_0^\infty \log(f_\Theta(\theta; \vect{\alpha}) ) \,  \frac{ \bfpi \exp({\theta \bfT x}) \theta \bft \, }{f_{X}(x)}f_\Theta(\theta) d\theta h(x)dx \,, \\[3mm]
	&\hat{\pi}_{k} \to \int_0^\infty  \int_0^\infty \frac{  \pi_k \, \bfe^{\top}_{k} \exp({\theta \bfT x }) \theta \bft }{f_{X}(x)} \, f_\Theta(\theta)   d\theta h(x) dx
		\,, \\[3mm]
	& \hat{t}_{kl} \to  \frac {\mathlarger{\int_0^\infty  \int_0^\infty \theta t_{kl} \frac{ \int^{x}_{0}  \bfe^{\top}_{l} \exp({\theta \bfT(x-u)})\theta \bft \bfpi \exp({\theta \bfT u })\bfe_{k}   du }{f_{X}(x)}  f_\Theta(\theta) d\theta  h(x) dx }}{\mathlarger{ \int_0^\infty \int_0^\infty   \theta \frac{ \int^{x}_{0}  \bfe^{\top}_{k} \exp({\theta \bfT(x-u)})\theta \bft \bfpi \exp({\theta \bfT u })\bfe_{k}   du }{f_{X}(x)}  f_\Theta(\theta) d\theta h(x) dx }}
		\,, \\[3mm]
	& \hat{t}_{k} \to  \frac {\mathlarger{ \int_0^\infty  \int_0^\infty \theta t_{k}   \frac{\bfpi \exp({\theta \bfT x })\bfe_{k}    }{f_{X}(x)} f_\Theta(\theta) d\theta h(x) dx }} {\mathlarger{ \int_0^\infty \int_0^\infty \theta \frac{  \int^{x}_{0}  \bfe^{\top}_{k} \exp({\theta \bfT(x-u)})\theta \bft \bfpi \exp({\theta \bfT u })\bfe_{k}   du }{f_{X}(x)}  f_\Theta(\theta) d\theta h(x) dx }}
		\,.
\end{align*}

As an example, we consider a matrix-Weibull distribution \citep{albrecher2019inhomogeneous}, whose density function is given by
\begin{align*}
	h(x) = \bfpi \exp({\mat{S} x^\beta}) \vect{s} \beta x^{\beta -1}\,, \quad x > 0, \,.
\end{align*}
where $\mat{S}$ is a sub-intensity matrix, and $\vect{s}=-\mat{S}\bfe$.
For the present illustration, we take parameters
\begin{gather*} 
	{\bfpi}=\left(
	0.5, \,0.3,\, 0.2\right)\,, \\ 
	{\mat{S}}=\left( \begin{array}{ccc}
	-1 &  1 & 0  \\
	0 & -2 & 1  \\
	0 & 0 & -5 \\
	\end{array} \right) \,, \\ 
	\beta = 0.75  \,.
\end{gather*}
Then, we approximate this distribution with a CPH distribution of 3 phases with a general Coxian structure in the PH component and positive stable mixing. The fitted distribution has parameters


\begin{gather*} 
	\hat{\bfpi}=\left(
	0.2326, \, 0.4098, \, 0.3576\right)\,, \\ 
	\hat{\bfT}=\left( \begin{array}{ccc}
	-1.3436  & 1.2722 & 0  \\
	0 & -11.2165  & 7.2335  \\
	0 & 0 & -0.8981 \\
	\end{array} \right) \,, \\ 
	\hat{\alpha} = 0.7932  \,.
\end{gather*}

Figure~\ref{fig:mweibull} shows that we recover the shape of the original distribution. Moreover, the parameter $\hat{\alpha} = 0.7932$, which determines the heaviness of the tail, is close to the corresponding one of $\beta = 0.75$ for the given theoretical model. This is no surprise since stable mixing generates the same class as matrix-Weibull distributions. In fact, $-(-\hat{\mat{T}})^\beta$ approximates $\mat{S}$, and although the former matrix is a valid sub-intensity matrix, in the present case it falls outside of the Coxian structure (but it is still upper triangular).
\begin{figure}[h]
\centering
\includegraphics[width=0.49\textwidth]{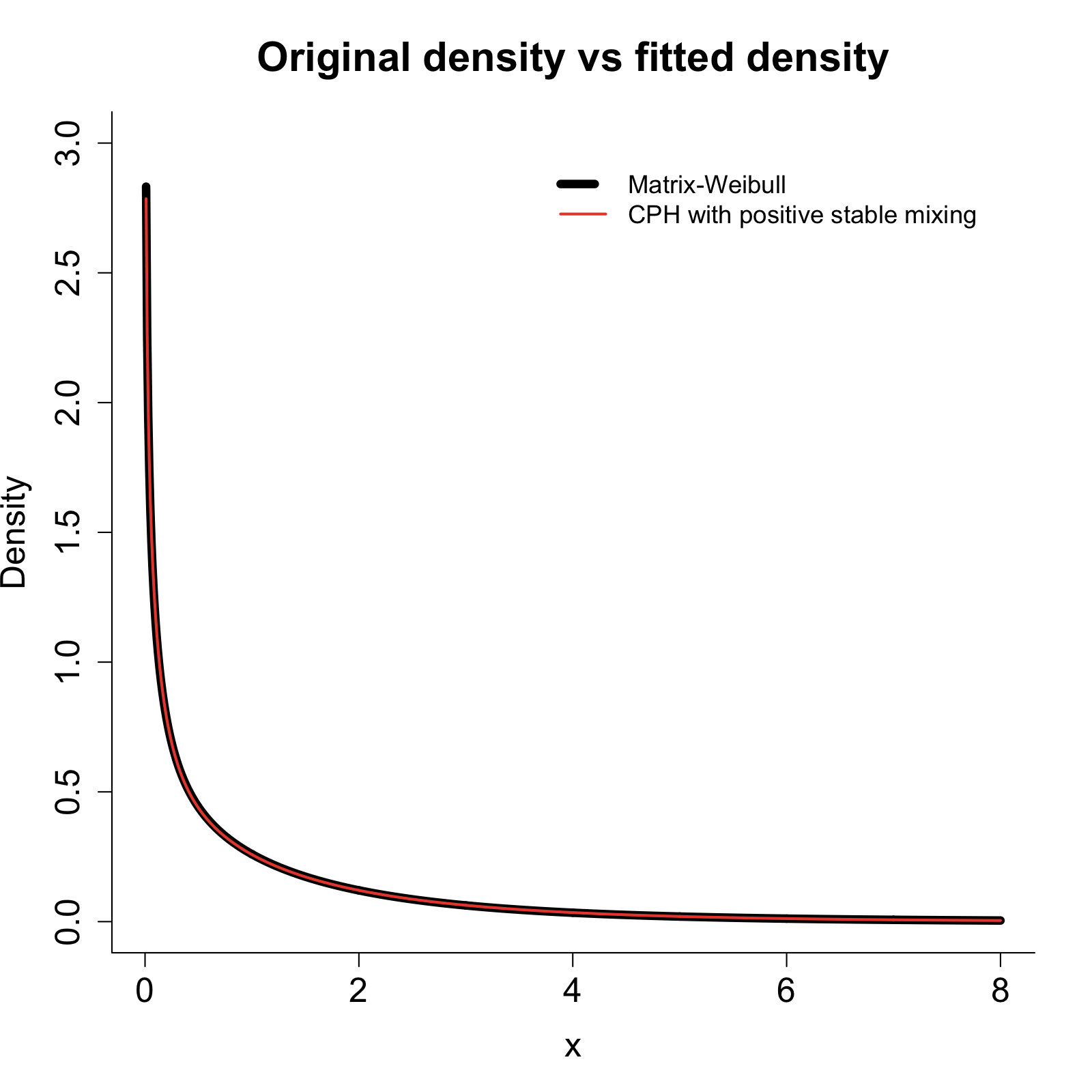}
\includegraphics[width=0.49\textwidth]{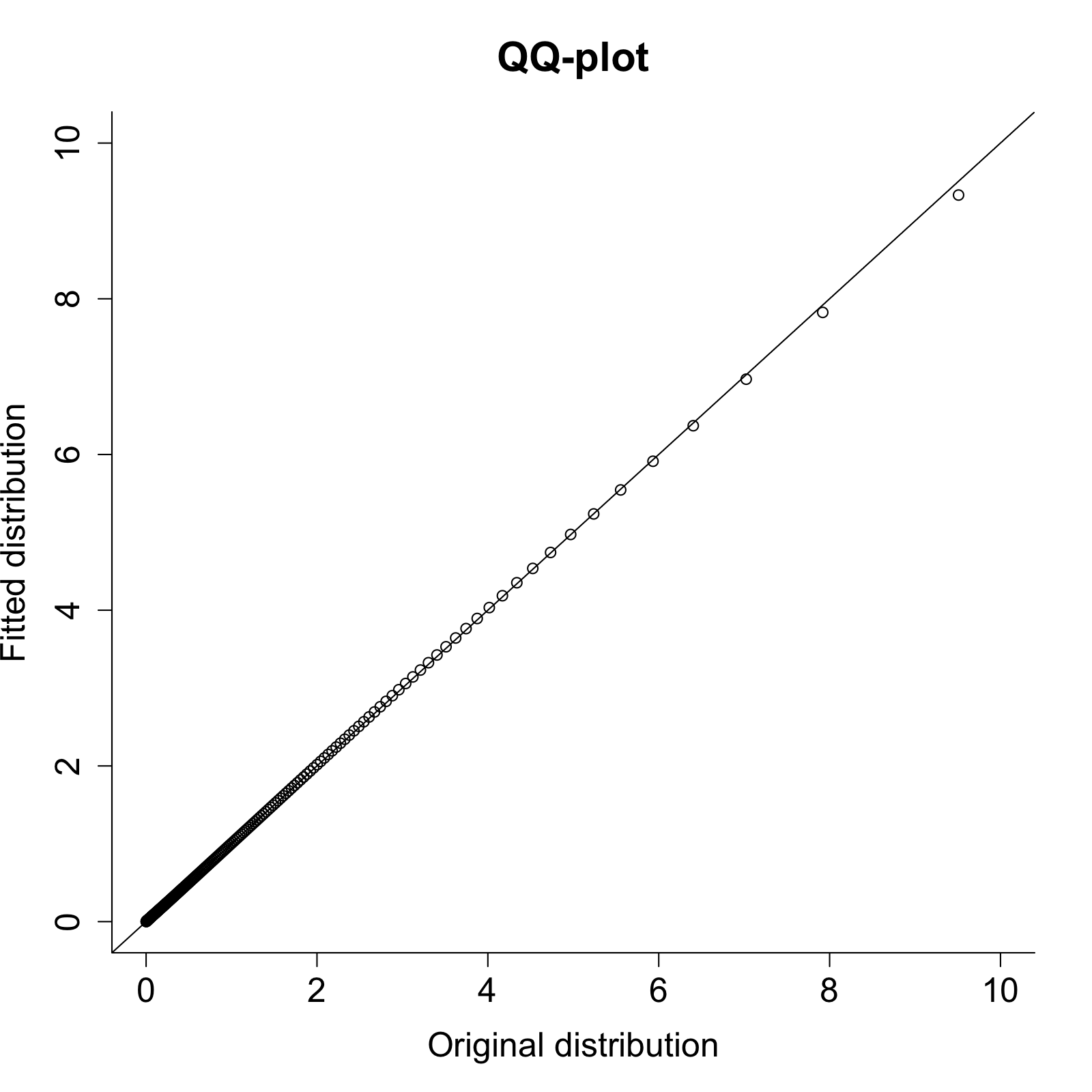}
\caption{Density of the original matrix-Weibull versus density of the fitted CPH (left), and corresponding QQ-plot (right)}
\label{fig:mweibull}
\end{figure}


\section{Conclusion}\label{sec:conclusion}
We studied scaled phase-type distributions when the scaling component is continuous. Particular emphasis was given to the closed-form expressions involved in different functionals, and we adapted and extended some recent results regarding the tail behavior of these distributions. We derived an EM algorithm for maximum likelihood estimation for fully observed data and outlined extensions for censored data. The case of fitting to a given theoretical distribution function was illustrated using a modified version of the latter algorithms. The performance of the proposed algorithm was exemplified in various numerical examples. The results suggest that these global models correctly capture the data's overall shape while remaining in agreement with existing tail behavior analysis. In addition to the results and insight gained for the one-dimensional setup considered in this paper, which is of interest on its own, one can also view the present analysis as a starting point for a multivariate framework when scaling a multivariate random vector with respect to the same continuous random variable. It will be interesting to develop the corresponding implementation and examine the performance on data as well as interpret the implied near-Archimedean dependence structure between the different dimensions from an applied perspective. That approach has potential for parsimonious multivariate modeling, but its concrete development leads to several challenges along the way, which we intend to address in future work.

\textbf{Acknowledgement.} Hansj\"org Albrecher and Martin Bladt would like to acknowledge financial support from the Swiss National Science Foundation Project 200021\_191984.

Jorge Yslas would like to acknowledge financial support from the Swiss National Science Foundation Project IZHRZ0\_180549.

\begin{appendices}\label{ap:def}

 \section{Definitions} 
 \begin{definition} \rm 
 A distribution function $F$ on $\R_{+}=[0, \infty)$ is called: \
	\begin{enumerate}
	\item  {\em Extended regularly varying} if 
		\begin{align*}
			\liminf_{x \to \infty} \frac{\ov F(\lambda x)}{\ov F( x)} \geq \lambda^{-\phi} 
		\end{align*}
	for some $\phi \geq 0$ and all $\lambda\geq1$. The class of extended regularly varying distributions is denoted by $\mathcal{E}$.
	\item {\em Intermediate regularly varying} if 
		\begin{align*}
		\lim_{\lambda \downarrow 1} \liminf_{x \to \infty} \frac{\ov{F}(\lambda x)}{\ov{F}( x)} =1.
		\end{align*}
	 The class of intermediate regularly varying distributions is denoted by $\mathcal{I}$.
	 \item {\em Dominated varying} if 
		\begin{align*}
		\liminf_{x \to \infty} \frac{\ov{F}(\lambda x)}{\ov{F}( x)} > 0
		\end{align*}
	 for some $\lambda>1$. The class of dominated varying distributions is denoted by $\mathcal{D}$.
	 \item {\em Long-tailed} if 
		\begin{align*}
			\lim_{x \to \infty} \frac{\ov{F}(x - y)}{\ov{F}( x)} =1 
		\end{align*}
	for any $y \in \mathbb{R}$. The class of long tailed distributions is denoted by $\mathcal{L}$.
	\end{enumerate}
\end{definition}
 	
 \end{appendices}

\bibliographystyle{apalike}
\bibliography{CPH}

\end{document}